\newtheorem{thm}{Theorem}[section]
\newtheorem{pr}{Proposition}[section]
\newtheorem{lem}{Lemma}[section]
\newtheorem{rem}{Remark}[section]
\renewenvironment{abstract}{%
        \small
        \quotation
         \noindent {\bfseries \abstractname } }%
      {\if@twocolumn\else\endquotation\fi}
\def\R{\mathbb R}
\def\N{\mathbb N}
\def\Om{\Omega}
\def\pa{\partial}
\providecommand{\norm}[1]{\left\lVert#1\right\rVert}
\title{\Large\bf
Determination of source and initial values for acoustic equations with a time-fractional attenuation}
\author{\large Xinchi HUANG$^1$, Yavar KIAN$^2$, \'Eric SOCCORSI$^2$, Masahiro YAMAMOTO$^3$}
\date{}
\begin{document}
\maketitle

\renewcommand{\thefootnote}{\fnsymbol{footnote}}
\footnotetext{\hspace*{-5mm} 
\begin{tabular}{@{}r@{}p{16cm}@{}} 
& Manuscript last updated: \today.\\
$^1$ 
& Graduate School of Mathematical Sciences, 
the University of Tokyo,
3-8-1 Komaba, Meguro-ku, Tokyo 153-8914, Japan.\\
& JSPS Postdoctoral Fellowships for research in Japan.
E-mail: 
huangxc@ms.u-tokyo.ac.jp.\\
$^2$
& Aix-Marseille Univ, Universit\'e de Toulon, CNRS, CPT, Marseille, France. 
E-mails:
yavar.kian@univ-amu.fr,
eric.soccorsi@univ-amu.fr.\\
$^3$ 
& Graduate School of Mathematical Sciences, 
the University of Tokyo,
3-8-1 Komaba, Meguro-ku, Tokyo 153-8914, Japan; \\
& Honorary Member of Academy of Romanian Scientists,
 Ilfov, nr. 3, Bucuresti, Romania,\\
&Correspondence member
of Accademia Peloritana dei Pericolanti, Palazzo Universit\`a, 
Piazza S. Pugliatti 1 98122 Messina, Italy;\\
& Peoples’ Friendship University of Russia (RUDN University) 6 Miklukho-Maklaya St, Moscow, 117198, Russian Federation. 
E-mail: myama@ms.u-tokyo.ac.jp.
\end{tabular}}

\begin{center}
\Large Dedicated to the memory of Victor Isakov (1947-2021)
\end{center}

\begin{abstract}
We consider the inverse problem of determining the initial states or the source term
of a hyperbolic equation damped by some non-local time-fractional derivative. 
This framework is relevant to
medical imaging such as thermoacoustic or photoacoustic tomography.
We prove a stability estimate for each of these two problems, 
with the aid of a Carleman estimate specifically designed for the governing equation.

\vskip 4.5mm

\noindent\begin{tabular}{@{}l@{ }p{11.5cm}} {\bf Keywords } &
time-fractional wave equation, inverse problem, stability, Carleman inequality.
\end{tabular}

\vskip 4.5mm


\end{abstract}

\section{Introduction and main results}
\label{sec-intro}
\subsection{Settings and governing equation}

Let $\Omega$ be a bounded domain in $\Bbb{R}^n$ with smooth boundary 
(e.g., of $\mathcal C^2$-class). 
Put $Q:= \Omega\times (0,T)$ and $\Sigma := \partial\Omega\times (0,T)$, 
where $T>0$ is arbitrarily fixed.
In what follows $\mathcal A$ is the differential operator 
$$
\mathcal A u = -\frac{1}{\rho(x)}\sum_{i,j=1}^n \partial_{i}\left(a_{ij}(x)\partial_{j}u\right) 
+ \sum_{j=1}^n b_j(x)\partial_j u + c(x) u,  \quad x\in \overline{\Omega}
$$
with real-valued coefficients $a_{ij}=a_{ji} \in \mathcal C^2(\overline{\Omega})$ for $i,j=1,\ldots,n$, 
$b_j \in L^\infty(\Omega)$ for 
$j=1,\ldots,n$, $c \in L^\infty(\Omega)$ and $\rho \in \mathcal{C}^2( \overline{\Omega})$.
We assume that there exists $a_0>0$ such that 
\begin{equation}
\label{ell}
\sum_{i,j=1}^n a_{ij}(x)\xi_i\xi_j \ge a_0 |\xi|^2, 
\quad x\in \overline{\Omega},\ \xi=(\xi_1,\ldots,\xi_n)\in \R^n
\end{equation}
and we suppose that $\rho_0\le \rho \le \rho_1$ in $\Omega$ for some positive constants 
$\rho_0$ and $\rho_1$.

Let $q \in W^{1,\infty}(\Omega)$, where
$W^{k,p}(X)$, $k\in \mathbb{N}$, $p\in \mathbb{N}\cup \{\infty\}$ denotes
the usual $k$-th order Sobolev space of $L^p$ functions on $X=(0,T)$ or $X=\Omega$, 
and let $\alpha \in W^{1,\infty}(\Omega)$ satisfy
$$ 0 < \alpha(x) \le \alpha_1,\ x \in \Omega, $$
for some constant $\alpha_1 \in (0,1)$. 
We introduce $\partial_t^\gamma$,
the Caputo fractional derivative of order $\gamma \in (0,1)$ with respect to $t$, as 
$$
\partial_t^{\gamma} u(t) := 
\frac{1}{\Gamma(1-\gamma)} \int_0^t (t-s)^{-\gamma}\partial_s u(s) ds,\ t > 0
$$
and we consider the following initial-boundary value problem 
for the time-fractional wave equation
\begin{equation}
\label{sy:intro}
\left\{
\begin{aligned}
& \partial_t^2 u(x,t) + q(x) \partial_t^{\alpha} u(x,t) + \mathcal{A}(x) u(x,t) = F(x,t), &&\quad (x,t)\in Q,\\
& u(x,t) = 0,                                                                                                      &&\quad (x,t)\in \Sigma,\\
& u(x,0) = u_0(x),\quad \partial_t u(x, 0) = u_1(x),                                             &&\quad x\in \Omega,
\end{aligned}
\right.
\end{equation}
where the fractional derivative $\partial_t^\alpha$ is pointwisely defined by
$$ \partial_t^\alpha u(x,t) := \partial_t^{\alpha(x)}u(x,t)\ \ (x,t)\in Q. $$

In the present paper, assuming that $\alpha(x) \in (0,1)$, 
we examine the stability issue in the inverse problems of 
determining 
either the source term $F$ or the initial conditions $(u_0,u_1)$ 
from a single boundary measurement of the solution to \eqref{sy:intro}.
Our method does not work directly for $\alpha(x) \in (1,2)$ and we leave this topic open for further research.

\subsection{Motivation and state of the art}

The inverse problems under consideration in this paper are inspired by thermoacoustic or 
photoacoustic tomography (TAT or PAT) problems, such as the determination of an initial pressure 
generated by microwave or laser excitations from acoustic boundary measurements. 
These problems appear in several medical imaging applications (see e.g., \cite{XW})
and can be regarded as the first inversion in the multiwave imaging modalities (see e.g., \cite{SU}). 
Tomography problems for biological tissues are commonly modeled by 
acoustic equations with frequency-dependent attenuation terms (see \cite{HP,Sz}) in the form of 
time-fractional derivatives, see \cite{CH} and \cite[Chapter 6]{TTS} where acoustic dissipation obeying arbitrary frequency power law is expressed by a Caputo derivative of order $\alpha(x) \in (1,3)$.
In this paper, the hyperbolic equation \eqref{sy:intro} describes the time-evolution of the pressure 
in biological tissues and 
we investigate the inverse problems from the TAT or PAT inversion in heterogeneous tissues.
%


The inverse problems of determining source terms and initial conditions in hyperbolic equations 
have received much attention over the last decade from the mathematical community because of 
their applicability to seismology and imaging.
One of the important approaches for solving these problems is the Bukhgeim-Klibanov method 
\cite{BK} based on so-called Carleman estimates (see also \cite{BY17,I1,Kh,Klibanov1992}). 
As for other works on hyperbolic inverse source problems 
in a bounded domain, we can refer for example to \cite[Chapter 7]{I}, \cite{Ya95,Ya99} and 
especially to \cite{JLY} which is concerned with hyperbolic equations with
time-dependent principal parts.
Moreover in \cite{SU1}, the speed or a space-varying part of the source term of the wave equation are recovered with a single measurement.
The above-cited articles are concerned with hyperbolic inverse source problems in a bounded 
spatial domain. Inverse source problems in an unbounded domain were studied in 
\cite{BHKY,HK,HKLZ} and in \cite{HKZ}, 
where the source term and an obstacle were retrieved.
As for the recovery of initial data in the background of the TAT or PAT, we refer the reader to 
\cite{AK,HKN,SU2}.
 
All the aforementioned papers considered inverse problems for classical partial differential equations, 
i.e., partial differential equations without non-local terms. Inverse source and/or initial data problems 
for time-fractional diffusion equations were
examined in \cite{JLLY,KSXY,KY2,KJ,LLY2}, but to our best knowledge \cite{AP} is the only paper 
available in the mathematical literature dealing with the recovery of the initial values of 
a hyperbolic equation with a non-local term. 
In the present paper, we aim to further generalize the results of \cite{AP} by showing determination of 
both the initial states and the source term of hyperbolic equations with more general non-local 
perturbations. 

\subsection{Main results and outline}

We start with the unique existence and the regularity of the solution to the initial-boundary value 
problem \eqref{sy:intro}, needed for the analysis of the inverse problems carried out in this article.
Here and henceforth, we use the notations $H^k(X) = W^{k,2}(X)$ and 
$L^p(X) = W^{0,p}(X)$ for $k, p\in \mathbb{N}\cup \{\infty\}$
and $X=(0,T)$ or $X=\Omega$, where $W^{k,p}(X)$ denotes the usual $k$-th order Sobolev space of 
$p$-th power integrable functions in $X$. 
The following result provides a solution to \eqref{sy:intro} which is needed by the analysis of 
the inverse problems under investigation in this article.

\begin{thm}
\label{thm:FP}
Let $u_0\in H^1_0(\Omega)$, let $u_1\in L^2(\Omega)$ and let $F\in L^1(0,T;L^2(\Omega))$. 
Then, there exists a unique solution  
$u\in \mathcal C([0,T];H^1_0(\Omega))\cap \mathcal C^1([0,T];L^2(\Omega))$ to \eqref{sy:intro}, 
satisfying 
\begin{equation}
\label{eq:reg1}
\norm{u}_{\mathcal C([0,T];H^1_0(\Omega))} + \norm{u}_{\mathcal C^1([0,T];L^2(\Omega))}
\leq 
C\left(\norm{u_0}_{H_0^1(\Omega)}+\norm{u_1}_{L^2(\Omega)} 
+\norm{F}_{L^1(0,T;L^2(\Omega))}\right)
\end{equation}
for some positive constant $C$ depending only on $\Omega$, $T$, $\rho$, $\alpha$, $a_{ij}$, $b_j$, 
$c$ and $q$.

Moreover, if $u_0\in H^2(\Omega)\cap H^1_0(\Omega)$, $u_1\in H^1_0(\Omega)$ and 
$F\in H^1(0,T;L^2(\Omega))$ then we have $u\in \mathcal C([0,T];H^2(\Omega) \cap H_0^1(\Omega))
\cap\mathcal C^1([0,T];H^1_0(\Omega))\cap \mathcal C^2([0,T];L^2(\Omega))$ and the estimate
\begin{equation}
\label{eq:reg2}
\norm{u}_{\mathcal C([0,T];H^2(\Omega))} + \norm{u}_{\mathcal C^1([0,T];H^1(\Omega))} 
+ \norm{u}_{\mathcal C^2([0,T];L^2(\Omega))}
\leq 
C\left(\norm{u_0}_{H^2(\Omega)}+\norm{u_1}_{H^1(\Omega)} 
+\norm{F}_{H^1(0,T;L^2(\Omega))}\right).
\end{equation}
\end{thm}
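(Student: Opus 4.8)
The plan is to treat the non-local term $q\,\pa_t^\al u$ as a forcing term and to solve \eqref{sy:intro} by a fixed-point argument built on the solution operator of the underlying local hyperbolic equation. First I would record that the principal part $-\rho^{-1}\sum_{i,j}\pa_i(a_{ij}\pa_j\,\cdot\,)$ is self-adjoint and positive for the weighted inner product $\langle u,v\rangle_\rho:=\int_\Om \rho\,u\,\ov{v}\,\dx$ on $\LL$, by \eqref{ell} and $\rho_0\le\rho\le\rho_1$, while the lower-order terms $\sum_j b_j\pa_j+c$ are bounded from $H^1_0(\Om)$ to $\LL$. Writing the equation in first-order form for $U=(u,\pa_t u)$, the operator $\begin{pmatrix} 0 & I \\ -\mathcal A & 0\end{pmatrix}$ is a bounded perturbation of a skew-adjoint generator and hence generates a strongly continuous group $(S(t))_{t\in\R}$ on $H^1_0(\Om)\times\LL$; via Duhamel's formula this yields a bounded solution operator for $\pa_t^2 v+\mathcal A v=G$ with data in $H^1_0(\Om)\times\LL$ and $G\in L^1(0,T;\LL)$.

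The crucial analytic ingredient is a mapping estimate for the Caputo term. For $u\in\cC^1([0,T];\LL)$ one has $\pa_t u\in\cC([0,T];\LL)$, and since $0<\al(x)\le\al_1<1$ the kernel satisfies $(t-s)^{-\al(x)}\le(t-s)^{-\al_1}+1$ uniformly in $x$, while $\Ga(1-\al(x))$ is bounded below by a positive constant on $\ov\Om$. Integrating pointwise in $x$ and using Minkowski's integral inequality I would obtain
\begin{equation*}
\norm{q\,\pa_t^\al u(\cdot,t)}_{\LL}\le C\int_0^t\big((t-s)^{-\al_1}+1\big)\,\norm{\pa_s u(\cdot,s)}_{\LL}\,\d s,
\end{equation*}
so that $u\mapsto q\,\pa_t^\al u$ is bounded from $\cC^1([0,T];\LL)$ into $\cC([0,T];\LL)\subset L^1(0,T;\LL)$, realized as a Volterra operator with weakly singular kernel. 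I would then set $\Phi(u)$ to be the solution of $\pa_t^2 v+\mathcal A v=F-q\,\pa_t^\al u$ with the prescribed initial data and seek a fixed point in $\cC([0,T];H^1_0(\Om))\cap\cC^1([0,T];\LL)$. Since the kernel is only weakly singular the contraction is not immediate on all of $[0,T]$; I would close the argument either by iterating on short subintervals or, more cleanly, by applying a generalized (fractional) Gronwall--Henry inequality to $\norm{v}_{H^1_0(\Om)}+\norm{\pa_t v}_{\LL}$, which simultaneously gives uniqueness and the a priori bound \eqref{eq:reg1}.

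For the improved regularity \eqref{eq:reg2}, the natural route is to differentiate the equation in time and study $w=\pa_t u$. The delicate point, and the main obstacle, is the interaction of $\pa_t$ with the Caputo operator: the identity $\pa_t\pa_t^\al u=\pa_t^\al(\pa_t u)+\Ga(1-\al)^{-1}t^{-\al}u_1$ produces an extra forcing term, and the $x$-dependence of $\al$ forbids any spectral diagonalization that would otherwise simplify the reduction. Here the assumptions $u_1\in H^1_0(\Om)$ and $F\in H^1(0,T;\LL)$ are exactly what is needed: the extra term $q\,\Ga(1-\al)^{-1}t^{-\al}u_1$ lies in $L^1(0,T;\LL)$ because $\al<1$, so the first-order result of the previous step applies to $w$ and gives $u\in\cC^1([0,T];H^1_0(\Om))\cap\cC^2([0,T];\LL)$, using $w(0)=u_1$ and $\pa_t w(0)=F(\cdot,0)-\mathcal A u_0\in\LL$ (recall $\pa_t^\al u(\cdot,0)=0$).

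Finally I would recover the spatial regularity by reading the equation as an elliptic problem: $\mathcal A u=F-\pa_t^2 u-q\,\pa_t^\al u$, whose right-hand side now belongs to $\cC([0,T];\LL)$ by the preceding steps. The $\cC^2$-regularity of $\pa\Om$ and standard elliptic estimates for $\mathcal A$ then yield $u\in\cC([0,T];H^2(\Om)\cap H^1_0(\Om))$, and collecting the three bounds establishes \eqref{eq:reg2}. The only genuinely non-standard difficulties throughout are the weakly singular memory kernel—handled by the singular Gronwall inequality—and the loss of separability caused by the variable order $\al(x)$, which is circumvented by keeping $q\,\pa_t^\al u$ on the source side at every stage.
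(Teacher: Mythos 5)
Your first part coincides with the paper's own argument: the paper likewise treats $q\,\partial_t^\alpha u$ via Duhamel as a Volterra perturbation of the undamped hyperbolic solution operator, bounds the weakly singular kernel exactly as you do (using $0<\alpha(x)\le\alpha_1<1$ and the positive lower bound of $\Gamma(1-\alpha)$ on $\overline\Omega$), and closes the argument by showing that a sufficiently high iterate $\mathcal G^m$ of the fixed-point map is a contraction on $\mathcal C([0,T];H_0^1(\Omega)\times L^2(\Omega))$, thanks to the factorial decay $C^mT^{m(2-\alpha_1)}/\Gamma(m(2-\alpha_1)+1)\to 0$, followed by Gr\"onwall's inequality for \eqref{eq:reg1}; your ``short subintervals / singular Gr\"onwall--Henry'' variant is an equivalent way of closing the same scheme. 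Where you genuinely diverge is the improved regularity. The paper does \emph{not} differentiate in time: it re-runs the identical fixed-point argument in the smaller space $(H^2(\Omega)\cap H_0^1(\Omega))\times H_0^1(\Omega)$, for which it must show that multiplication by $q\,t^{-\alpha}/\Gamma(1-\alpha)$ is bounded on $H^1(\Omega)$ with an integrable singularity in $t$; this is precisely where the hypothesis $q,\alpha\in W^{1,\infty}(\Omega)$ is consumed, and where a logarithm appears, since $\nabla t^{-\alpha}=-(\nabla\alpha)(\ln t)\,t^{-\alpha}$, forcing the replacement of the exponent $\alpha_1$ by $\alpha_2=(1+\alpha_1)/2$ in order to absorb the bounded factor $t^{\alpha_2-\alpha}(1+|\ln t|)$. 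Your route --- differentiate the equation, use the Caputo commutation identity $\partial_t\partial_t^\alpha u=\partial_t^\alpha(\partial_t u)+\Gamma(1-\alpha)^{-1}t^{-\alpha}u_1$, apply the first-order theory to $w=\partial_t u$, then recover spatial regularity from $\mathcal A u=F-\partial_t^2u-q\,\partial_t^\alpha u$ and elliptic estimates (legitimate here since $a_{ij},\rho\in\mathcal C^2(\overline\Omega)$ and $\partial\Omega$ is smooth) --- is sound and even buys something: it requires no spatial regularity of $q$ and $\alpha$ at all, and it makes transparent why $u_1\in H_0^1(\Omega)$ and $F\in H^1(0,T;L^2(\Omega))$ are exactly the right hypotheses. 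The one step you must add to make it rigorous is logical rather than analytic: the solution produced by part one is only $\mathcal C^1$ in time, so you cannot literally differentiate it; instead, solve the initial-boundary value problem for $w$ directly (data $w(\cdot,0)=u_1$, $\partial_t w(\cdot,0)=F(\cdot,0)-\mathcal A u_0$, source $\partial_t F-q\,\Gamma(1-\alpha)^{-1}t^{-\alpha}u_1\in L^1(0,T;L^2(\Omega))$), verify that $u_0+\int_0^t w(\cdot,s)\,ds$ solves \eqref{sy:intro} --- the $u_1$-correction terms cancel upon integrating the equation in time --- and invoke the uniqueness of part one to conclude that $w=\partial_t u$. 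With that patch, both proofs finish symmetrically by reading the last piece of regularity off the equation: the paper deduces $u\in\mathcal C^2([0,T];L^2(\Omega))$ this way, while you deduce $u\in\mathcal C([0,T];H^2(\Omega)\cap H_0^1(\Omega))$.
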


For self-contained description and the convenience of the reader, 
the proof of Theorem \ref{thm:FP} is given in Section \ref{sec:FP}.

The key achievement of this article is the Lipschitz-stable determination of either the initial states 
$u_0$ and $u_1$ or the source term $F$, appearing in \eqref{sy:intro}, by one local Neumann data. 
This dual identification result is established through two different stability inequalities 
that we will make precise below. 
For the sake of simplicity, we state these estimates in the peculiar framework where
$$
\rho = 1\ \mbox{in}\ \Omega\ \quad \mbox{and}\ \quad 
a_{ij} = \delta_{ij}\ \mbox{in}\ \Omega,\ 1 \leq i,j \leq n. 
$$
That is to say that we consider  
the following initial-boundary value problem associated with $\alpha, q$ in $W^{1,\infty}(\Omega)$ and
$b_j, c \in L^\infty(\Omega)$, $1\le j\le n$,
\begin{equation}
\label{sy1}
\left\{
\begin{aligned}
& \partial_t^2 u(x,t) + q(x)\partial_t^{\alpha} u(x,t) - \Delta u(x,t) + B(x) \!\cdot\! \nabla u(x,t) + c(x)u(x,t) 
= F(x,t),                                                                                                        &&\quad (x,t)\in Q,\\
& u(x,t) = 0,                                                                                          &&\quad (x,t)\in \Sigma,\\
& u(x,0) = u_0(x),\quad \partial_t u(x, 0) = u_1(x),                                             &&\quad x\in \Omega
\end{aligned}
\right.
\end{equation}
instead of \eqref{sy:intro}. Here and below we use the notation $B:=(b_1,\ldots,b_n)^{T}$ and 
we suppose that we have
$$  \sum_{j=1}^n \norm{b_j}_{L^{\infty}(\Omega)} +  \norm{c}_{L^{\infty}(\Omega)} 
+ \norm{\alpha}_{W^{1,\infty}(\Omega)} +  \norm{q}_{W^{1,\infty}(\Omega)} \leq M $$
for some {\it a priori} fixed positive constant $M$.
In this context, our first main result establishes that one local Neumann data stably determines 
either of the two initial states $u_0$ or $u_1$, when the other one and the source term $F$ are known.

\begin{thm}
\label{thm:stab1}
There exist $T_0>0$ and a sub-boundary $\Gamma_0 \subset \partial \Omega$ such that 
for all $T \ge T_0$, all
$(u_0,u_1) \in H_0^1(\Omega) \times L^2(\Omega)$ satisfying either
\begin{equation}
\label{assump}
\mbox{(\romannumeral1)}\ \ u_0 = 0\ \mbox{in}\ \Omega\ \hspace*{.5cm} \mbox{or}\ \hspace*{.5cm} 
\mbox{(\romannumeral2)}\ \ u_1 = 0\ \mbox{in}\ \Omega,
\end{equation}
and all $F \in L^2(Q)$, we have
$$
\|u_0\|_{H^1(\Omega)} + \|u_1\|_{L^2(\Omega)} 
\leq C\left( \|F\|_{L^2(Q)} + \|\partial_\nu u\|_{L^2(\Gamma_0 \times (0,T))} \right).
$$
Here $C$ is a positive constant depending only on $\Omega$, $T$, $\Gamma_0$ and $M$, 
and $u$ is the $\mathcal C^1([0,T]; L^2(\Omega))\cap \mathcal C([0,T]; H_0^1(\Omega))$-solution to 
the initial-boundary value problem \eqref{sy1}.
\end{thm}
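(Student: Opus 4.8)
The plan is to run the Bukhgeim–Klibanov scheme directly on the cylinder $Q=\Omega\times(0,T)$, without any reflection in time, so that the non-local Caputo term keeps its natural one-sided form and can be handled as a perturbation of the principal wave operator $P_0:=\partial_t^2-\Delta$. I would work with the usual hyperbolic Carleman weight $\varphi(x,t)=e^{\lambda\psi(x,t)}$, $\psi(x,t)=|x-x_0|^2-\beta t^2$, with a point $x_0\in\R^n\setminus\overline\Omega$ chosen so that $\{x\in\partial\Omega:(x-x_0)\cdot\nu(x)>0\}\subset\Gamma_0$, and with $\beta\in(0,1)$, $\lambda$ large and $T\ge T_0$ large enough that $\psi$ is pseudoconvex with respect to $P_0$ and that $\varphi(\cdot,T)$ is small enough to discard the boundary contributions at $t=T$. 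The feature of this weight that makes the whole argument possible is that, for each fixed $x$, the map $t\mapsto\varphi(x,t)$ is strictly \emph{decreasing} on $(0,T)$, while it attains its maximum at $t=0$, exactly where the initial data live. Rewriting \eqref{sy1} as $P_0u=F-q\,\partial_t^\alpha u-B\cdot\nabla u-c\,u$, the strategy is to apply to $u$ the Carleman estimate for $P_0$ and then to absorb every term on the right except $F$.

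The main obstacle — and the only genuinely new ingredient — is the absorption of the memory term $q\,\partial_t^\alpha u$, which is non-local in $t$ and so is not covered by the classical lower-order bookkeeping. Here I would exploit the monotonicity of the weight. Writing $\partial_\tau u=e^{-s\varphi(x,\tau)}\bigl(e^{s\varphi(x,\tau)}\partial_\tau u\bigr)$ inside the Caputo integral and using $\varphi(x,t)\le\varphi(x,\tau)$ for $0<\tau<t$, one obtains the pointwise bound
\[
e^{s\varphi(x,t)}\,|\partial_t^\alpha u(x,t)|
\le \frac{1}{\Gamma(1-\alpha)}\int_0^t (t-\tau)^{-\alpha}\,
e^{s\varphi(x,\tau)}\,|\partial_\tau u(x,\tau)|\,d\tau,
\]
which exhibits the weighted fractional derivative as the convolution in $t$ of the weighted velocity with the kernel $\tau^{-\alpha}\in L^1(0,T)$ (recall $\alpha(x)<1$). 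Young's inequality in $t$, integration in $x$, and $\|q\|_{L^\infty(\Omega)}\le M$ then yield
\[
\int_Q |q\,\partial_t^\alpha u|^2\,e^{2s\varphi}\,dx\,dt
\le C\int_Q |\partial_t u|^2\,e^{2s\varphi}\,dx\,dt,
\]
with $C=C(T,\alpha_1,M)$ \emph{independent of the large parameter $s$}. Since the left-hand side of the Carleman estimate controls $s\int_Q|\partial_t u|^2e^{2s\varphi}$, this term — together with the genuinely lower-order contributions $\int_Q|B\cdot\nabla u|^2e^{2s\varphi}$ and $\int_Q|c\,u|^2e^{2s\varphi}$ — is absorbed into the left-hand side once $s$ is large enough.

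What remains is to read off the initial data from the boundary integrals generated at $t=0$ by the integrations by parts in $t$. Because $\partial_t^\alpha u(\cdot,0)=0$ and $\partial_t\varphi(\cdot,0)=0$, these $t=0$ terms involve only $u(\cdot,0)=u_0$, $\partial_t u(\cdot,0)=u_1$ and the spatial derivatives of $u_0$, and — crucially — they require no pointwise trace $F(\cdot,0)$, which is consistent with the mere hypothesis $F\in L^2(Q)$. The role of the dichotomy \eqref{assump} is to neutralise the indefinite cross term $u_1\nabla u_0$: under (\romannumeral1) the datum $u_0$ is absent and the surviving $t=0$ term is a positive multiple of $s\int_\Omega|u_1|^2e^{2s\varphi(x,0)}dx$, whereas under (\romannumeral2) the velocity $u_1$ is absent and one is left with a positive multiple of $s\int_\Omega\bigl(|\nabla u_0|^2+s^2|u_0|^2\bigr)e^{2s\varphi(x,0)}dx$. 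In either case the reduced estimate takes the form
\[
s\int_\Omega \Phi(x)\,e^{2s\varphi(x,0)}\,dx
\le C\int_Q |F|^2\,e^{2s\varphi}\,dx\,dt
+ Cs\int_{\Gamma_0\times(0,T)} |\partial_\nu u|^2\,e^{2s\varphi}\,d\sigma\,dt,
\]
where $\Phi=|u_1|^2$ in case (\romannumeral1) and $\Phi=|\nabla u_0|^2+s^2|u_0|^2$ in case (\romannumeral2).

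Finally I would fix $s$ and remove the exponentials: bounding $e^{2s\varphi(x,0)}$ from below and $e^{2s\varphi}$ from above by positive constants over $\Omega$ and $Q$ respectively turns the last display into $\|u_1\|_{L^2(\Omega)}^2\le C\bigl(\|F\|_{L^2(Q)}^2+\|\partial_\nu u\|_{L^2(\Gamma_0\times(0,T))}^2\bigr)$ in case (\romannumeral1) and the analogous bound for $\|u_0\|_{H^1(\Omega)}^2$ in case (\romannumeral2); since the complementary datum vanishes, both are equivalent to the stated inequality after taking square roots. All the integrations by parts are justified first for smooth data, for which Theorem~\ref{thm:FP} provides a solution in $\mathcal C^2([0,T];L^2(\Omega))\cap\mathcal C^1([0,T];H^1_0(\Omega))\cap\mathcal C([0,T];H^2(\Omega))$, and then extended to $(u_0,u_1,F)\in H^1_0(\Omega)\times L^2(\Omega)\times L^2(Q)$ by the density and continuity furnished by \eqref{eq:reg1}.
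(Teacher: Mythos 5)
Your treatment of the memory term is sound --- it is exactly the paper's Lemma \ref{lem:frac}, proved the same way (monotonicity of $t\mapsto\varphi(x,t)$ plus Young's convolution inequality) --- but the core mechanism of your proof fails. You propose to read $(u_0,u_1)$ off positive boundary terms generated at $t=0$ by the integrations by parts in the Carleman estimate. For the weight \eqref{con:varphi} one has $\partial_t\psi(x,0)=-2\beta t|_{t=0}=0$, hence $\partial_t\varphi(\cdot,0)=0$, and if you carry out the conjugation $w=e^{s\varphi}u$ and track the $t$-boundary terms, every sign-definite one at $t=0$ (those proportional to $|\partial_t w|^2$, $|\nabla w|^2$ or $s^2|w|^2$) comes multiplied by $\partial_t\varphi(\cdot,0)$ and so vanishes identically. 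The only $t=0$ terms not carrying this factor are cross terms such as $s\int_\Omega \partial_t w\,\nabla\varphi\cdot\nabla w\,dx\big|_{t=0}$, and these are precisely the ones annihilated by the dichotomy \eqref{assump}. Consequently, under \eqref{assump} the $t=0$ contributions vanish \emph{entirely}: there is no surviving ``positive multiple of $s\int_\Omega|u_1|^2e^{2s\varphi(x,0)}dx$'' in case (\romannumeral1), nor of $s\int_\Omega(|\nabla u_0|^2+s^2|u_0|^2)e^{2s\varphi(x,0)}dx$ in case (\romannumeral2). The Carleman inequality by itself carries no information whatsoever about the initial data, and the ``reduced estimate'' on which your last two steps rest never arises. (This is also why the paper's time-symmetrization is harmless: the odd/even reflection buys exactly the cancellation of the $t=0$ terms, nothing more --- it does not create the definite terms you need.)

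What is missing is a separate mechanism linking $E(0)$ to quantities the Carleman estimate does control, and this is what the paper's energy lemma (Lemma \ref{lem:EE}) provides. In the paper's proof, the left-hand side of Proposition \ref{pr:ce} dominates $e^{M_0 s}\int_0^{T/2}E(t)\,dt$, which by \eqref{en2} dominates $e^{M_0 s}\left(cE(0)-\norm{F}_{L^2(Q)}^2\right)$; meanwhile the indefinite trace terms at $t=T$ --- which cannot simply be ``discarded'' because the weight is small there, as you suggest --- are bounded via \eqref{en1} by $Cs^3e^{M_1 s}\left(E(0)+\norm{F}_{L^2(Q)}^2\right)$ and then absorbed into the left-hand side because $M_0>M_1$; this absorption is where $T\ge T_0$ and \eqref{con:beta} genuinely enter. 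Without Lemma \ref{lem:EE} (or some substitute relating $E(0)$ to the space-time integrals controlled by the Carleman estimate), your argument cannot be completed as written.
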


We stress out that the sub-boundary $\Gamma_0$ is not arbitrary here. 
As a matter of fact we will see below that it is required that 
$\Gamma_0$ is taken sufficiently large in order to satisfy the geometric condition 
\eqref{con:Gamma0} for some $x_0 \in \R^n \setminus \overline{\Omega}$.

The second main result of this paper claims stable recovery of the spatial varying factor of 
the source term, by one local boundary observation of the solution.
More precisely, if we assume that $F$ is of the form
\begin{equation}
\label{def-F}
F(x,t)=R(x,t) f(x),\ (x,t) \in Q,
\end{equation}
where 
$R\in W^{1,\infty}(Q)$ is known and satisfies
\begin{equation}
\label{con:R}
|R(x,0)| \ge r_0, \quad x\in \overline{\Omega}
\end{equation}
for some $r_0>0$, then one local boundary observation of the solution to \eqref{sy1} stably determines 
the unknown function $f$
in $L^2(\Omega)$.
\begin{thm}
\label{thm:stab}
There exist $T_0>0$ and a sub-boundary $\Gamma_0$ such that for all $T \ge T_0$, all
$f \in L^2(\Omega)$ and all $R \in W^{1,\infty}(Q)$ satisfying \eqref{con:R}, the following estimate
$$
\|f\|_{L^2(\Om)} \leq C \|\pa_t (\pa_\nu u)\|_{L^2(\Gamma_0 \times (0,T))}
$$
holds for some positive constant $C$ depending only on $\Omega$, $T$, $\Gamma_0$, $M$ and 
$r_0$. 
Here $u$ denotes the 
$\mathcal C^2([0,T]; L^2(\Omega))\cap \mathcal C^1([0,T]; H_0^1(\Omega))$-solution
to the initial-boundary value problem \eqref{sy1} where $u_0 = u_1 = 0$ in $\Omega$ and 
$F$ is defined by \eqref{def-F}. 
\end{thm}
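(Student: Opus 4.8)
The plan is to recast the inverse source problem as an inverse initial-value problem by differentiating \eqref{sy1} in time, and then to run the Carleman-based Bukhgeim--Klibanov argument that already underlies Theorem~\ref{thm:stab1}. First I would invoke the second part of Theorem~\ref{thm:FP}: since $u_0=u_1=0$ and $F=Rf\in H^1(0,T;\LL)$ (because $R\in W^{1,\infty}(Q)$ and $f\in\LL$, so that $\pa_t(Rf)=(\pa_t R)f\in L^2(Q)$), the solution $u$ lies in $\mathcal C^2([0,T];\LL)\cap\mathcal C^1([0,T];H_0^1(\Om))$, so that $w:=\pa_t u$ is a genuine $\mathcal C^1([0,T];\LL)\cap\mathcal C([0,T];H_0^1(\Om))$ object. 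Evaluating \eqref{sy1} at $t=0$ and using $u(\cdot,0)=0$ (hence $\na u(\cdot,0)=\De u(\cdot,0)=0$) together with $\pa_t^\al u(\cdot,0)=0$ (the Caputo integral vanishes at $t=0$), I obtain the pointwise identity $\pa_t^2u(x,0)=R(x,0)f(x)$. By \eqref{con:R} this gives $\norm{f}_{\LL}\le r_0^{-1}\norm{\pa_t w(\cdot,0)}_{\LL}$, so the whole problem reduces to estimating the single quantity $\pa_t w(\cdot,0)$.

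Next I would differentiate \eqref{sy1} in $t$; the only delicate point is the non-local term. Writing $\pa_t^\al u(x,\cdot)$ as the time-convolution of the kernel $t^{-\al(x)}/\Ga(1-\al(x))$ with $\pa_t u(x,\cdot)$ and differentiating, the boundary contribution at the inner endpoint carries the factor $\pa_t u(\cdot,0)=u_1=0$, whence $\pa_t(\pa_t^\al u)=\pa_t^\al(\pa_t u)=\pa_t^\al w$. Therefore $w$ solves the same type of problem, namely $\pa_t^2w+q\pa_t^\al w-\De w+B\!\cdot\!\na w+cw=\wt F$ in $Q$ and $w=0$ on $\Sg$, with source $\wt F:=(\pa_t R)f$ and initial data $w(\cdot,0)=0$, $\pa_t w(\cdot,0)=R(\cdot,0)f$. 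Moreover $\pa_\nu w=\pa_\nu(\pa_t u)=\pa_t(\pa_\nu u)$, so the observation appearing in the statement is exactly the Neumann trace of $w$ on $\Gamma_0\times(0,T)$.

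One cannot invoke Theorem~\ref{thm:stab1} as a black box here, because its source term is assumed \emph{known}, whereas $\wt F=(\pa_t R)f$ still carries the unknown $f$. Instead I would apply the Carleman estimate that proves Theorem~\ref{thm:stab1} directly to $w$, with a weight $\vp=\e^{\la\psi}$, $\psi(x,t)=|x-x_0|^2-\be t^2$, peaking at $t=0$, adapted to the geometric condition \eqref{con:Gamma0} for some $x_0\in\R^n\setminus\ov\Om$ and to the choice $T\ge T_0$. Combining it with the energy identity that expresses the weighted norm of $\pa_t w(\cdot,0)$ through a time integral over $(0,T)$, I would reach an inequality of the form
\begin{align*}
\int_\Om |\pa_t w(x,0)|^2\e^{2s\vp(x,0)}\,\dx
&\le C\iint_Q|\wt F|^2\e^{2s\vp} + C\iint_Q|q\pa_t^\al w|^2\e^{2s\vp}\\
&\quad + C\int_{\Gamma_0\times(0,T)}|\pa_\nu w|^2\e^{2s\vp},
\end{align*}
where the local lower-order contributions $|\na w|^2$ and $|w|^2$ have already been absorbed by the $s$-weighted energy on the left for $s$ large, and the terms carried at $t=T$ are exponentially dominated by the $t=0$ weight once $T\ge T_0$.

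The two remaining terms are then disposed of through the gain in the large parameter $s$. For the source, since $t\mapsto\vp(x,t)$ has a quadratic maximum at $t=0$, one has $\int_0^T\e^{2s\vp(x,t)}\,\dt\le Cs^{-1/2}\e^{2s\vp(x,0)}$, so that
\[
\iint_Q|\wt F|^2\e^{2s\vp}
\le Cs^{-1/2}\int_\Om|f|^2\e^{2s\vp(\cdot,0)}\,\dx
\le \frac{C}{r_0^2}\,s^{-1/2}\int_\Om|\pa_t w(\cdot,0)|^2\e^{2s\vp(\cdot,0)}\,\dx,
\]
which is absorbed into the left-hand side for $s$ large. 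The genuinely new difficulty is the non-local term $\iint_Q|q\pa_t^\al w|^2\e^{2s\vp}$: because the Caputo derivative is a causal time-convolution, I would bound $\int_0^T|\pa_t^\al w|^2\e^{2s\vp}\,\dt$ by $C\int_0^T|\pa_t w|^2\e^{2s\vp}\,\dt$ through a weighted Young-type inequality compatible with the monotonicity of $t\mapsto\vp(x,t)$ and with $\al(x)\le\al_1<1$, so that this term is controlled by the $s$-weighted energy $s\iint_Q|\pa_t w|^2\e^{2s\vp}$ sitting on the Carleman left-hand side and is likewise absorbed. This weighted-convolution estimate, ensuring that the fractional attenuation does not destroy the Carleman mechanism, is the heart of the matter and the step I expect to be hardest. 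Once it is in place, fixing $s$ and bounding $\e^{2s\vp}$ above and below on $\ov\Om\times(0,T)$ gives $\norm{\pa_t w(\cdot,0)}_{\LL}\le C\norm{\pa_\nu w}_{L^2(\Gamma_0\times(0,T))}$, and combining this with $\norm{f}_{\LL}\le r_0^{-1}\norm{\pa_t w(\cdot,0)}_{\LL}$ and $\pa_\nu w=\pa_t(\pa_\nu u)$ yields the claimed estimate.
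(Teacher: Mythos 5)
Your proposal is correct and is essentially the paper's own proof: the time-differentiation that transfers $f$ into the initial velocity via $\partial_t v(\cdot,0)=R(\cdot,0)f$, the refusal to use Theorem~\ref{thm:stab1} as a black box, the Carleman estimate with the fractional damping treated as a source and tamed by a weighted Young convolution bound exploiting the time-monotonicity of $\varphi$ (this is exactly the paper's Lemma~\ref{lem:frac}), the Bukhgeim--Klibanov identity for $\partial_t v(\cdot,0)$ (the paper's Lemma~\ref{lem:pe}), and the absorption of the unknown source through the decay of $t\mapsto\varphi(\cdot,t)$ away from $t=0$ are precisely the steps of Section~\ref{sec:pr-thm:stab}. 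The one point you gloss is the $t=T$ trace terms: the weight gap $\sup_{x}\varphi(x,T)<\inf_{x}\varphi(x,0)$ (guaranteed by \eqref{con:beta} once $T\ge T_0$) is not by itself sufficient, because those traces involve $\nabla_{x,t}v(\cdot,T)$ rather than $f$; one must first invoke the forward energy estimate \eqref{en1} of Lemma~\ref{lem:EE} to get $\norm{\nabla_{x,t}v(\cdot,T)}_{L^2(\Omega)}\le C\norm{f}_{L^2(\Omega)}$, and only then does the exponential weight comparison allow these terms to be absorbed into the left-hand side for large $s$.
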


Theorems \ref{thm:stab1} and \ref{thm:stab} are proved in Section \ref{sec:pr-thm:stab}. 
We leave it to the reader to check that these two statements remain valid when 
the master equation \eqref{sy1} is replaced by
$$
\partial_t^2 u(x,t) + \sum_{k=1}^N q_k(x)\partial_t^{\alpha_k} u(x,t) + \mathcal{A}(x) u(x,t) = F(x,t), 
\quad (x,t)\in \Omega\times (0,T)
$$
with suitable multi-order coefficients
$0 < \alpha_k  < 1$ in ${\Omega}$, $k=1,\ldots, N$ and appropriate elliptic operator 
$\mathcal{A}$ (see e.g. \cite{BY17, H19}). 
Moreover, we point out that the case of $t$-dependent coefficients can be handled 
by adapting the strategy developed in \cite{YLY18} for a classical hyperbolic equation, 
to the framework of Theorems \ref{thm:stab1} and \ref{thm:stab}.



To our best knowledge, Theorems \ref{thm:stab1} and \ref{thm:stab} are the only mathematical results 
on the recovery of the source term and the initial conditions of a hyperbolic equation with 
a time-fractional damping term.
There is a slightly similar result in \cite{AP} though, where the initial conditions of a hyperbolic equation 
with some non-local zeroth order term associated with a $\mathcal C^2(\overline{Q})$-kernel, 
are retrieved. 
On the other hand, the fractional attenuation that we consider in the present paper is generated by 
the singular kernel $t \mapsto \frac{t^{-\alpha(x)}}{\Gamma(1-\alpha(x))}$. 
We refer the reader to \cite{DGGS} for the physical relevance of non-local singular perturbations of 
hyperbolic systems. 

The derivation of Theorems \ref{thm:stab1} and \ref{thm:stab} boils down to a specific Carleman 
inequality (and a suitable energy estimate) for \eqref{sy1}, stated in Proposition \ref{pr:ce}. 
More precisely, we prove Theorem \ref{thm:stab1} by applying the Bukhgeim-Klibanov method. 
When the attenuation is switched off ($q =0$ in $\Omega$), this is a well-known approach for solving 
inverse problems associated with \eqref{sy:intro}. 
However, when $q \neq 0$, this is no longer the case since the fractional damping term 
$q \partial_t^\alpha u$ is non-local.

The remaining part of this paper is organized as follows.
In Section \ref{sec:FP}, we discuss the unique existence of the solution to \eqref{sy:intro} and 
establish some suitable energy estimate.
In Section \ref{sec:CE}, we design a Carleman inequality for the master equation \eqref{sy1}. 
The proofs of Theorems \ref{thm:stab1} and \ref{thm:stab}, which boil down to the energy estimate of 
Section \ref{sec:FP} and the Carleman inequality of Section \ref{sec:CE}, 
are given in Section \ref{sec:pr-thm:stab}.

\section{Analysis of the forward problem}
\label{sec:FP}
In this section we prove Theorem \ref{thm:FP} and we establish an energy estimate for the solution to 
the initial boundary value problem \eqref{sy:intro}, needed for the proof of Theorems \ref{thm:stab1} 
and \ref{thm:stab}.

\subsection{Proof of Theorem \ref{thm:FP}}
We shall derive Theorem \ref{thm:FP} from a fixed-point theorem argument and a classical unique 
existence result for hyperbolic equations. We start by showing existence within the class
$\mathcal C([0,T];H^1_0(\Omega)) \cap \mathcal C^1([0,T];L^2(\Omega))$ of the solution to 
\eqref{sy:intro}, depending continuously on the initial values and the source term.

\subsubsection{Solving the direct problem in $\mathcal C([0,T];H^1_0(\Omega)) \cap \mathcal C^1([0,T];L^2(\Omega))$}
\label{sec-dp1}
For $(u_0,u_1) \in \mathcal H:=H^1_0(\Omega)\times L^2(\Omega)$, 
we consider the $\mathcal C([0,T];H^1_0(\Omega)) \cap \mathcal C^1([0,T];L^2(\Omega))$-solution 
$u$ to \eqref{sy:intro} with $q=0$ and $F=0$, given by \cite[Chap. 1, Lemma 3.6]{L}. 
Then, for each $t \in [0,T]$, we introduce the operator 
\begin{equation}
\label{def-U0}
U_0(t) : (u_0,u_1) \to (u(t),\partial_t u(t))
\end{equation} 
and recall that $t\mapsto U_0(t)\in\mathcal C([0,T];\mathcal B (\mathcal H))$. 
Here and henceforth, we denote by $\mathcal B(X,Y)$ the set of linear bounded operators from 
the Banach space $X$ to the Banach space $Y$, 
and we write $\mathcal B(X)$ instead of $\mathcal B(X,X)$. 

If $q=0$ and $F \in L^1(0,T;L^2(\Omega))$, then it is well known that \eqref{sy:intro} admits 
a unique solution $u$ such that 
$U_1:=(u,\partial_t u) \in \mathcal C([0,T];\mathcal H)$ reads
\begin{equation}
\label{t1d}
U_1(t)=U_0(t)(u_0,u_1)+\int_0^tU_0(t-s)(0,F(s))ds,\ t \in [0,T],
\end{equation}
and satisfies the estimate
\begin{equation}
\label{es}
\norm{U_1}_{\mathcal C([0,T];\mathcal H)}
\leq C_0 \left(\norm{u_0}_{H^1(\Omega)}+\norm{u_1}_{L^2(\Omega)}
+\norm{F}_{L^1(0,T;L^2(\Omega))}\right),
\end{equation}
where the constant $C_0:=\norm{U_0}_{\mathcal C([0,T];\mathcal B(\mathcal H))}$ depends only on 
$\Omega$, $T$, $\rho$,  $c$, $b_j$ and $a_{ij}$ for $1 \leq i,j\leq n$.

If $q$ is not uniformly zero we put
\begin{equation}
\label{def-Q}
Q(t):=-\frac{t^{-\alpha}}{\Gamma(1-\alpha)} \left(\begin{array}{ll}0&0\\ 0& q \end{array}\right),\ t\in (0,T]
\end{equation}
and infer from Duhamel's principle that 
$u \in \mathcal C([0,T];H^1_0(\Omega)) \cap \mathcal C^1([0,T];L^2(\Omega))$ solves
\eqref{sy:intro} if and only if $U:=(u,\partial_t u)$ is a $\mathcal C([0,T];\mathcal H)$-solution to 
the integral equation 
\begin{equation}
\label{FP-eq2}
U(t)=(\mathcal GU)(t):=U_1(t)+\int_0^t\int_0^sU_0(t-s)Q(s-\tau)U(\tau) d\tau ds,\ t \in [0,T].
\end{equation}
Thus it is enough to show that some iterate $\mathcal G^m$ of $\mathcal G$ is a contraction mapping 
on $\mathcal C([0,T];\mathcal H)$. 
To do that, we start by noticing for all $t \in (0,T]$ that $Q(t) \in \mathcal B(\mathcal H)$ and that
\begin{equation}
\label{FP-eq3}
\norm{Q(t)}_{\mathcal B(\mathcal H)} \leq C_1 \frac{t^{-\alpha_1}}{\Gamma(1-\alpha_1)},
\end{equation}
where $C_1:= \Gamma(1-\alpha_1) \max\{1,T^{\alpha_1}\}  \norm{q}_{L^\infty(\Omega)}$. 
Therefore, setting
$\tilde{\mathcal G} V:=\mathcal G V - U_1$ for all $V\in \mathcal C([0,T];\mathcal H)$, i.e.
$$
(\tilde{\mathcal G} V)(t) := 
\int_0^t \int_0^s U_0(t-s)Q(s-\tau)V(\tau) d\tau ds,\ t \in [0,T],\ V\in \mathcal C([0,T];\mathcal H)
$$
we get that
$\norm{(\tilde{\mathcal G} V)(t)}_{\mathcal H}
\leq 
C \int_0^t\int_0^s \frac{(s-\tau)^{-\alpha_1}}{\Gamma(1-\alpha_1)}\norm{V(\tau)}_\mathcal H d\tau ds$
with $C:=C_0 C_1$. As a consequence we have
\begin{eqnarray}
\norm{(\tilde{\mathcal G} V)(t)}_{\mathcal H}
&\leq & C \int_0^t \int_\tau^t \frac{(s-\tau)^{-\alpha_1}}{\Gamma(1-\alpha_1)} \norm{V(\tau)}_\mathcal H 
ds d\tau \nonumber\\
& \leq & C \int_0^t\frac{(t-\tau)^{1-\alpha_1}}{\Gamma(2-\alpha_1)}\norm{V(\tau)}_\mathcal H d\tau 
\label{eq-tg}
\end{eqnarray}
by Fubini's theorem. 
From this and \eqref{es}-\eqref{FP-eq2} we see that $\mathcal G$ maps 
$\mathcal C([0,T]; \mathcal H)$ into itself. 
Next, by iterating \eqref{eq-tg} we get in the same manner as in the proof of 
\cite[Proposition 1]{FK} (see also \cite[Section 2.3.6]{P99}) that 
\begin{equation*}
\norm{(\tilde{\mathcal G}^m V)(t)}_{\mathcal H}
\leq C^m\int_0^t\frac{(t-\tau)^{m(2-\alpha_1)-1}}{\Gamma(m(2-\alpha_1))}
\norm{V(\tau)}_{\mathcal H}d\tau,\  t\in(0,T),\ m \in \N.
\end{equation*}
Thus, for all $V, W \in \mathcal C([0,T];\mathcal H)$, all $m \in \N$ and all $t\in [0,T]$ we have
$$
\norm{(\mathcal G^m V)(t) - (\mathcal G^m W)(t)}_{\mathcal H} 
= \norm{(\tilde{\mathcal G}^m (V-W))(t)}_{\mathcal H}
\leq C^m \int_0^t \frac{(t-\tau)^{m(2-\alpha_1)-1}}{\Gamma(m(2-\alpha_1))} 
\norm{(V-W)(\tau)}_{\mathcal H}d\tau 
$$
and consequently
\begin{eqnarray}
\norm{\mathcal G^m V - \mathcal G^m W}_{\mathcal C([0,T];\mathcal H)} 
&\leq & C^m \left(\int_0^T\frac{t^{m(2-\alpha_1)-1}}{\Gamma(m(2-\alpha_1))}dt\right)
\norm{V-W}_{\mathcal C([0,T];\mathcal H)} \nonumber \\
&\leq & \frac{C^m T^{m(2-\alpha_1)}}{\Gamma(m(2-\alpha_1)+1)}
\norm{V-W}_{\mathcal C([0,T];\mathcal H)}, \label{contractive}
\end{eqnarray}
by Young's convolution inequality. Taking $m \in \N$ so large that 
$\frac{C^{m}T^{m(2-\alpha_1)}}{\Gamma(m(2-\alpha_1)+1)} < 1$, which is possible since
$\lim_{p\to+\infty}\frac{C^pT^{p(2-\alpha_1)}}{\Gamma(p(2-\alpha_1)+1)} = 0$, 
we obtain that $\mathcal G^m$ is contractive on $\mathcal C([0,T];\mathcal H)$. 
Therefore, $\mathcal G$ admits a unique fixed point $U\in \mathcal C([0,T];\mathcal H)$ 
by the Banach fixed-point theorem. 

Further, putting \eqref{es}, \eqref{FP-eq2} and \eqref{eq-tg} together, we get that
\begin{eqnarray*}
\norm{U(t)}_{\mathcal H} 
& \leq & C \int_0^t \frac{(t-\tau)^{1-\alpha_1}}{\Gamma(2-\alpha_1)} \norm{U(\tau)}_{\mathcal H}d\tau 
+ C_0 \left(\norm{u_0}_{H^1(\Omega)}+\norm{u_1}_{L^2(\Omega)}+\norm{F}_{L^1(0,T;L^2(\Omega))}\right) \\
& \leq &
C\frac{\max\{1,T^{1-\alpha_1}\}}{\Gamma(2-\alpha_1)} \int_0^t\norm{U(\tau)}_{\mathcal H}d\tau 
+ C_0 \left(\norm{u_0}_{H^1(\Omega)}+\norm{u_1}_{L^2(\Omega)}+\norm{F}_{L^1(0,T;L^2(\Omega))}\right).
\end{eqnarray*}
This and Gr\"onwall's inequality yield \eqref{eq:reg1}. 

We turn now to proving the second claim of Theorem \ref{thm:FP}. 

\subsubsection{Improved regularity}

We use the same notations and we follow the same path as in Section \ref{sec-dp1}, 
the initial values $(u_0,u_1)$ being taken in 
$\mathcal H:=(H^2(\Omega)\cap H^1_0(\Omega))\times H^1_0(\Omega)$ 
and the source term $F$ in $H^1(0,T;L^2(\Omega))$.
Namely, with reference to \cite[Theorem 2.3 in Chapter 4 and Theorem 7.1 in Chapter 5]{LM2}, 
the mapping $U_0 \in \mathcal C([0,T];\mathcal B(\mathcal H))$ is defined by \eqref{def-U0} where 
$u$ denotes the $\mathcal C([0,T];H^2(\Omega)\cap H^1_0(\Omega)) \cap \mathcal 
C^1([0,T],H^1_0(\Omega))$-solution to \eqref{sy:intro} with $q=0$ and $F=0$. 
Then, we define $U_1 \in \mathcal C([0,T];\mathcal H)$ by \eqref{t1d} and we replace the estimate 
\eqref{es} by
\begin{equation}
\label{es2}
\norm{U_1}_{\mathcal C([0,T];\mathcal H)}
\leq C_0 \left(\norm{u_0}_{H^{2}(\Omega)}+\norm{u_1}_{H^{1}(\Omega)}
+\norm{F}_{H^{1}(0,T;L^2(\Omega))}\right),
\end{equation}
where the constant $C_0=\norm{U_0}_{\mathcal C([0,T];\mathcal B(\mathcal H))}$ still depends on 
$\Omega$, $T$, $\rho$, $c$, $b_j$ and $a_{ij}$ for $1 \leq i,j\leq n$. 
Now, since the $\mathcal C([0,T];H^2(\Omega) \cap H^1_0(\Omega)) \cap \mathcal C^1([0,T];H_0^1(\Omega))$-solution to
\eqref{sy:intro} is the first component of the $\mathcal C([0,T];\mathcal H)$-solution to 
the integral equation \eqref{FP-eq2}, we are left with the task of showing that some iterate of 
$\mathcal G$ is a contractive map on $\mathcal C([0,T];\mathcal H)$. 
This essentially boils down to the following estimate
\begin{equation} 
\label{es3}
\norm{Q(t)}_{\mathcal B(\mathcal H)} \leq C_2 \frac{t^{-\alpha_2}}{\Gamma(1-\alpha_2)},\ t\in(0,T),
\end{equation}
where $\alpha_2:=\frac{1+\alpha_1}{2}$ and $C_2$ is a positive constant depending only on 
$\Omega$, $T$, $\alpha$ and $q$. 
Indeed, since
$\alpha$ and $q$ lie in $W^{1,\infty}(\Omega)$, then it is clear from \eqref{def-Q} that 
$Q(t) \in \mathcal B(\mathcal H)$ for all $t\in (0,T]$. Moreover, we have
\begin{eqnarray*}
\nabla \left( \frac{q t^{-\alpha}}{\Gamma(1-\alpha)}\right)
&= &  \nabla \left( \frac{q}{\Gamma(1-\alpha)} \right) t^{-\alpha} + \frac{q}{\Gamma(1-\alpha)} \nabla t^{-\alpha} \\
&= & \Gamma(1-\alpha_2) t^{\alpha_2-\alpha} \left( \nabla \left( \frac{q}{\Gamma(1-\alpha)} \right) - \frac{q\nabla \alpha}{\Gamma(1-\alpha)} \ln t \right)   \frac{t^{-\alpha_2}}{\Gamma(1-\alpha_2)}
\end{eqnarray*}
by direct  computation, hence by taking into account that $t \mapsto t^{\alpha_2-\alpha} (1+ |\ln t|)$ 
is uniformly bounded in $(0,T]$, we get that
$$
\norm{\nabla \left( \frac{q t^{-\alpha}}{\Gamma(1-\alpha)} \right)}_{\mathcal B(H^1(\Omega);L^2(\Omega))} 
\leq 
C \frac{t^{-\alpha_2}}{\Gamma(1-\alpha_2)},\ t \in (0,T],
$$
for some positive constant $C$ depending only on $\Omega$, $T$, $\alpha$ and $q$, 
which may change from line to line. Now, \eqref{es3} follows readily from this and the estimate
$\norm{\frac{q t^{-\alpha}}{\Gamma(1-\alpha)}}_{\mathcal{B}(L^2(\Omega))} \leq C_1 \frac{t^{-\alpha_1}}{\Gamma(1-\alpha_1)}$ arising from \eqref{FP-eq3}.

Having established \eqref{es3}, we argue as in the derivation of \eqref{contractive} from 
\eqref{FP-eq3}, where \eqref{es3} 
is substituted for \eqref{FP-eq3}, and find that \eqref{FP-eq2} admits a unique solution 
$U \in \mathcal C([0,T];\mathcal H)$ satisfying
$$
\norm{U}_{\mathcal C([0,T];\mathcal H)}
\leq 
C\left(\norm{u_0}_{H^{2}(\Omega)}+\norm{u_1}_{H^{1}(\Omega)}+\norm{F}_{H^{1}(0,T;L^2(\Omega))}\right).
$$
Here and below, $C$ denotes a positive constant depending only on $T$, $\Omega$, $\rho$, 
$\alpha$, $q$, $c$, $b_j$ and $a_{ij}$ for $1 \leq i,j \leq n$, which may change from line to line. 
As a consequence, there exists a unique solution 
$u\in \mathcal C([0,T]; H^2(\Omega)\cap H^1_0(\Omega))\cap \mathcal C^1([0,T];H^1_0(\Omega))$ 
to \eqref{sy:intro}, such that
\begin{equation}
\label{t1e}
\norm{u}_{\mathcal C([0,T];H^2(\Omega))}+\norm{u}_{\mathcal C^1([0,T];H^1(\Omega))}
\leq 
C\left(\norm{u_0}_{H^{2}(\Omega)}+\norm{u_1}_{H^{1}(\Omega)}+\norm{F}_{H^{1}(0,T;L^2(\Omega))}\right).
\end{equation}
Since $\partial_t^2 u= F -q\partial_t^\alpha u - \mathcal A u\in\mathcal C([0,T];L^2(\Omega))$, 
we get that $u\in \mathcal C^2([0,T];L^2(\Omega))$. 
Moreover, using that $\norm{u}_{\mathcal C^2([0,T];L^2(\Omega))}
=\norm{u}_{\mathcal C^1([0,T];L^2(\Omega))}+\norm{\partial_t^2 u}_{\mathcal C([0,T];L^2(\Omega))}$, 
we obtain
\begin{eqnarray*}
\norm{u}_{\mathcal C^2([0,T];L^2(\Omega))}
&\leq & \norm{u}_{\mathcal C^1([0,T];L^2(\Omega))} + \norm{\mathcal A u}_{\mathcal C([0,T];L^2(\Omega))}+\norm{q}_{L^\infty(\Omega)}\norm{\partial_t^\alpha u}_{\mathcal C([0,T];L^2(\Omega))}+\norm{F}_{\mathcal C([0,T];L^2(\Omega))}\\
&\leq & C \left( \norm{u}_{\mathcal C([0,T];H^2(\Omega))}+\norm{u}_{\mathcal C^1([0,T];L^2(\Omega))}+ \norm{F}_{H^1(0,T;L^2(\Omega))} \right).
\end{eqnarray*}
This and \eqref{t1e} yield \eqref{eq:reg2}, which completes the proof of Theorem \ref{thm:FP}. 

\subsection{Energy estimate}
We turn now to studying the time-evolution of the energy 
\begin{equation}
\label{en0}
E(t) := \int_\Omega \bigg(\frac{1}{\rho(x)}\sum_{i,j=1}^n a_{ij}(x)\partial_i u(x,t)\partial_j u(x,t) 
+ |\partial_t u(x,t)|^2\bigg) dx,\ t\in [0,T],
\end{equation}
of the time-fractional hyperbolic system \eqref{sy:intro} with
initial data $(u_0,u_1) \in H_0^1(\Omega) \times L^2(\Omega)$ and source term $F \in L^2(Q)$. 
Here, $u$ denotes the $\mathcal C([0,T];H_0^1(\Omega)) \cap \mathcal C^1([0,T];L^2(\Omega))$-solution to \eqref{sy:intro}, given by Theorem  \ref{thm:FP}.

The result we have in mind is as follows.

\begin{lem}
\label{lem:EE}
There exists a constant $C>0$, depending on $T$, $\Omega$, $\alpha$, $\rho$, $a_{ij}$, $b_j$ 
for $1 \leq i,j \leq n$, $c$ and $q$, such that the two following estimates hold for all $t \in (0,T]$: 
\begin{equation}
\label{en1}
E(t) \leq C \left( E(0) + \norm{F}_{L^2(Q)}^2 \right)
\end{equation}
\begin{equation}
\label{en2}
E(0) \leq \frac{C}{t} \left( \int_0^t E(s) ds + \norm{F}_{L^2(Q)}^2 \right).
\end{equation}
\end{lem}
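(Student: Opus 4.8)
The plan is to run the classical energy method on \eqref{sy:intro}, multiplying the equation by $\partial_t u$ (and not by $\rho\,\partial_t u$), this being precisely the multiplier that reproduces the weight $\tfrac1\rho$ attached to the gradient term in \eqref{en0}. Integrating over $\Omega$ and integrating by parts in $x$ --- the boundary terms vanishing since $u=0$, hence $\partial_t u=0$, on $\Sigma$ --- the second-order term contributes $\tfrac12\frac{d}{dt}\|\partial_t u(t)\|_{L^2(\Omega)}^2$ and the principal part of $\mathcal A$ contributes $\tfrac12\frac{d}{dt}\int_\Omega\tfrac1\rho\sum_{i,j}a_{ij}\partial_i u\,\partial_j u\,dx$ (using $a_{ij}=a_{ji}$ and the fact that $\rho$ does not depend on $t$); together these two equal $\tfrac12 E'(t)$. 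The remaining contributions --- the term produced when the spatial derivative falls on the weight $1/\rho$, the first-order term $\sum_j b_j\partial_j u$, and the zeroth-order term $cu$ --- are all lower order and, after invoking the ellipticity \eqref{ell}, the bounds $\rho_0\le\rho\le\rho_1$, and the Poincar\'e inequality on $H^1_0(\Omega)$, are controlled by $C\,E(t)$. To avoid differentiating the merely $\mathcal C^1([0,T];L^2(\Omega))$ solution pointwise in $t$, I would first establish the computation in integrated form over $\Omega\times(0,t)$ for the smooth solutions provided by the second part of Theorem \ref{thm:FP}, then extend the resulting identity to general data by density together with the continuity estimate \eqref{eq:reg1}.

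The only genuinely new ingredient, and the main obstacle, is the non-local damping term $\int_\Omega q\,\partial_t^\alpha u\,\partial_t u\,dx$. Here I would bound $\|\partial_t^\alpha u(t)\|_{L^2(\Omega)}$ by splitting the singular kernel as $(t-s)^{-\alpha(x)}=(t-s)^{-\alpha(x)/2}(t-s)^{-\alpha(x)/2}$ and applying the Cauchy-Schwarz inequality in $s$, which gives the pointwise bound
$$
|\partial_t^\alpha u(x,t)|^2\le \frac{1}{\Gamma(1-\alpha(x))^2}\Big(\int_0^t(t-s)^{-\alpha(x)}ds\Big)\int_0^t(t-s)^{-\alpha(x)}|\partial_s u(x,s)|^2\,ds.
$$
Since $0<\alpha(x)\le\alpha_1<1$, the prefactor is uniformly bounded in $x$ and $(t-s)^{-\alpha(x)}\le\max(1,T^{\alpha_1})(t-s)^{-\alpha_1}$ on $(0,T]$, so integrating in $x$ yields
$$
\|\partial_t^\alpha u(t)\|_{L^2(\Omega)}^2\le C\int_0^t(t-s)^{-\alpha_1}\|\partial_s u(s)\|_{L^2(\Omega)}^2\,ds\le C\int_0^t(t-s)^{-\alpha_1}E(s)\,ds.
$$
Combining this with Cauchy-Schwarz in $x$ and Young's inequality, the damping term is controlled by $C\big(E(t)+\int_0^t(t-s)^{-\alpha_1}E(s)\,ds\big)$, while the source term is handled trivially by $|\int_\Omega F\,\partial_t u\,dx|\le\tfrac12\|F(t)\|_{L^2(\Omega)}^2+\tfrac12 E(t)$.

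Collecting these estimates, I expect the integro-differential inequality
$$
\Big|\tfrac12 E'(t)\Big|\le C\Big(E(t)+\int_0^t(t-s)^{-\alpha_1}E(s)\,ds+\|F(t)\|_{L^2(\Omega)}^2\Big),
$$
understood in integrated form over $(0,t)$. For \eqref{en1} I would integrate from $0$ to $t$, apply Fubini to the convolution double integral --- using $\int_s^t(\tau-s)^{-\alpha_1}d\tau=\tfrac{(t-s)^{1-\alpha_1}}{1-\alpha_1}\le\tfrac{T^{1-\alpha_1}}{1-\alpha_1}$ --- to reduce it to $C\int_0^t E(s)\,ds$, and then conclude by Gr\"onwall's inequality that $E(t)\le C\big(E(0)+\|F\|_{L^2(Q)}^2\big)$. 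For \eqref{en2} I would instead read the same integrated inequality as the uniform bound $E(0)\le E(\tau)+C\big(\int_0^t E(s)\,ds+\|F\|_{L^2(Q)}^2\big)$, valid for every $\tau\in[0,t]$; integrating this in $\tau$ over $(0,t)$ produces $\int_0^t E(\tau)\,d\tau$ on the right-hand side, and dividing by $t$ while absorbing the factors $t\le T$ gives $E(0)\le\tfrac{C}{t}\big(\int_0^t E(s)\,ds+\|F\|_{L^2(Q)}^2\big)$, which is exactly \eqref{en2}.
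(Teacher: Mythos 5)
Your proposal is correct and follows essentially the same route as the paper: multiply by $\partial_t u$, treat the non-local damping term as a lower-order perturbation, integrate once in $s$ and again in $\tau$ over $(0,t)$ to produce \eqref{en2}, and pass from smooth solutions to rough data by density together with \eqref{eq:reg1}. The only cosmetic differences are that you bound $\norm{\partial_t^\alpha u(\cdot,t)}_{L^2(\Omega)}$ by a pointwise Cauchy--Schwarz splitting of the kernel where the paper invokes Young's convolution inequality in time, and you re-derive \eqref{en1} via Gr\"onwall where the paper obtains it directly from the well-posedness estimate \eqref{eq:reg1} combined with the equivalence of $E(t)$ with $\norm{\nabla u(\cdot,t)}_{L^2(\Omega)^n}^2+\norm{\partial_t u(\cdot,t)}_{L^2(\Omega)}^2$.
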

\begin{proof}
For all $t \in [0,T]$, we infer from \eqref{ell} and \eqref{en0} that
\begin{equation}
\label{en3}
\min\left\{1, \frac{a_0}{\rho_1}\right\} \left( \norm{\nabla u(\cdot,t)}_{L^2(\Omega)^n}^2 + \norm{\partial_t u(\cdot,t)}_{L^2(\Omega)}^2 \right) \leq E(t) \leq 
\max\left\{1, \frac{a_1}{\rho_0}\right\}  \left( \norm{\nabla u(\cdot,t)}_{L^2(\Omega)^n}^2 + \norm{\partial_t u(\cdot,t)}_{L^2(\Omega)}^2 \right), 
\end{equation}
where we have set $a_1:=\max_{1 \le i \le j \le n} \norm{a_{ij}}_{\mathcal C(\overline{\Omega})}$, hence \eqref{en1} follows immediately from this and \eqref{eq:reg1}. 

We turn now to proving \eqref{en2}. We proceed in two steps.

\noindent {\it Step 1.} First we establish \eqref{en2} for $u \in  \mathcal C^2([0,T];L^2(\Omega)) \cap \mathcal C^1([0,T];H_0^1(\Omega)) \cap  \mathcal C([0,T];H^2(\Omega) \cap H_0^1(\Omega)))$. 
Namely, in accordance with Theorem \ref{thm:FP} we consider a solution $u$ to \eqref{sy:intro} with 
initial states $(u_0,u_1,F) \in (H^2(\Omega) \cap H_0^1(\Omega)) \times H_0^1(\Omega) \times H^1(0,T;L^2(\Omega))$ and we multiply both sides of \eqref{sy:intro} by $\partial_t u$. 
Then, integrating over $\Omega$ we get
$$
\int_\Omega \partial_t u \;\partial_t^2 u\; dx + \int_\Omega q\partial_t u \;\partial_t^\alpha u\; dx 
+ \int_\Omega \partial_t u \bigg( -\frac{1}{\rho} \sum_{i,j=1}^n \partial_i(a_{ij}\partial_j u) + B\cdot\nabla u + cu \bigg) dx
= \int_\Omega \partial_t u \;F\; dx
$$
in $(0,T)$. Bearing in mind that $u=0$ on $\Sigma$, we integrate by parts in 
$\int_\Omega \partial_t u \left( -\frac{1}{\rho} \sum_{i,j=1}^n \partial_i(a_{ij}\partial_j u) \right) dx$ and 
obtain for all $\tau \in (0,T]$ and all $s \in (0,\tau)$ that
$$
-\frac12 \frac{d}{ds}E(s) 
= \bigl< \partial_s u(\cdot,s) , q \partial_s^\alpha u(\cdot,s) +  \sum_{i,j=1}^n a_{ij} \partial_i\Big(\frac{1}{\rho}\Big)\partial_j u(\cdot,s) +B\cdot\nabla u(\cdot,s) + c u(\cdot,s) - F(\cdot,s) \bigr>_{L^2(\Omega)},
$$
where the symbol $\langle \cdot , \cdot \rangle_{L^2(\Omega)}$ denotes the usual scalar product in $L^2(\Omega)$.
Thus, by H\"older's inequality, we have
\begin{eqnarray}
-\frac12 \frac{d}{ds}E(s) 
& \leq & C \left( \norm{\partial_s u(\cdot,s)}_{L^2(\Omega)}^2 +   \norm{\partial_s^\alpha u(\cdot,s)}_{L^2(\Omega)}^2 + \norm{u(\cdot,s)}_{H^1(\Omega)}^2 + \norm{F(\cdot,s)}_{L^2(\Omega)}^2 \right),\ \nonumber \\
& \leq & C \left(E(s)  +  \norm{\partial_s^\alpha u(\cdot,s)}_{L^2(\Omega)}^2 + \norm{F(\cdot,s)}_{L^2(\Omega)}^2\right),\ s \in (0,\tau), \label{en4}
\end{eqnarray}
where $C$ is a positive constant depending on $\Omega$, $\rho$, $a_{ij}$, $b_j$, $c$ and $q$. 
In the last line, we applied 
the Poincar\'e inequality to $u(\cdot,s) \in H_0^1(\Omega)$ and used the left-hand side of \eqref{en3}. 
Further, integrating 
\eqref{en4} over $(0,\tau)$ yields
\begin{equation}
\label{en5}
E(0) \leq E(\tau) + C \left( \int_0^\tau E(s) ds  +  \norm{\partial_t^\alpha u}_{L^2(\Omega \times (0,\tau))}^2 + \norm{F}_{L^2(\Omega \times (0,\tau))}^2\right),\ \tau \in (0,T]. 
\end{equation}
Next, applying Young's convolution inequality, we get that 
$\norm{\partial_t^\alpha u}_{L^2(\Omega \times (0,\tau))} \leq \kappa \norm{\partial_t u}_{L^2(\Omega \times (0,\tau))}$ for some positive constant $\kappa$ depending only on $\alpha$ and $T$, hence \eqref{en5} reads
\begin{equation}
\label{en6}
E(0) \leq E(\tau) + C \left( \int_0^\tau E(s) ds + \norm{F}_{L^2(\Omega \times (0,\tau))}^2  \right),\ \tau \in (0,T],
\end{equation}
where the constant $C$ depends not only on $\Omega$, $\rho$, $a_{ij}$, $b_j$, $c$ and $q$, 
but also on $T$ and $\alpha$. Now, for $t \in (0,T]$ fixed, we integrate \eqref{en6} with respect to 
$\tau$ over $(0,t)$ and find that
$$ t E(0) \leq (1+CT) \int_0^t E(s) ds + CT \norm{F}_{L^2(\Omega \times (0,t))}^2, $$
which yields \eqref{en2} for $(u_0,u_1) \in (H^2(\Omega) \cap H_0^1(\Omega)) \times H_0^1(\Omega)$ and $F \in H^1(0,T;L^2(\Omega))$. 
We turn now to extending this result to the case where 
$(u_0,u_1) \in H_0^1(\Omega) \times L^2(\Omega)$ and $F \in L^2(Q)$.

\noindent {\it Step 2.} Let us consider the 
$\mathcal C([0,T];H_0^1(\Omega)) \cap \mathcal C^1([0,T];L^2(\Omega))$-solution $u$ to 
\eqref{sy:intro} associated with 
$(u_0,u_1) \in H_0^1(\Omega) \times L^2(\Omega)$ and $F \in L^2(Q)$, given by Theorem \ref{thm:FP}. 
By density of $\mathcal C^\infty_{c}(\Omega)$ in $H_0^1(\Omega)$ and in $L^2(\Omega)$, 
and of $\mathcal C^\infty_{c}(Q)$ in $L^2(Q)$, there exist three sequences $\{u_{0,m}\}_{m=1}^{\infty} \in \mathcal C^\infty_{c}(\Omega)^\N$, $\{u_{1,m}\}_{m=1}^{\infty} \in \mathcal C^\infty_{c}(\Omega)^\N$ and $\{F_m\}_{m=1}^{\infty} \subset \mathcal C^\infty_{c}(Q)^\N$ such that
\begin{equation}
\label{en7}
\lim_{m\to+\infty} \|u_0 - u_{0,m}\|_{H_0^1(\Omega)} = 0, \quad
\lim_{m\to+\infty} \|u_1 - u_{1,m}\|_{L^2(\Omega)} = 0, \quad
\lim_{m\to+\infty} \|F - F_m\|_{L^2(Q)} = 0.
\end{equation}
For $m \in \N$, we denote by $u_m$ the $\mathcal C^2([0,T];L^2(\Omega)) \cap \mathcal C^1([0,T];H_0^1(\Omega)) \cap  \mathcal C([0,T];H^2(\Omega) \cap H_0^1(\Omega)))$-solution to the initial-boundary value problem \eqref{sy:intro} where $(u_{0,m}, u_{1,m}, F_m)$ is substituted for $(u_0, u_1, F)$. 
Evidently we have
\begin{equation}
\label{en8}
E_m(0) \leq \frac{C}{t} \left( \int_0^t E_m(s) ds + \norm{F_m}_{L^2(Q)}^2 \right)
\end{equation}
from {\it Step 1}, where $E_m(t)$ denotes the energy obtained by substituting $u_m$ for $u$ in \eqref{en0}. Next, 
since $u-u_m$ is a solution to \eqref{sy:intro} where $(u_0,u_1,F)$ is replaced by $(u_0-u_{0,m},u_1-u_{1,m},F-F_m) \in H_0^1(\Omega) \times L^2(\Omega) \times L^2(Q)$, then we have
\begin{eqnarray}
& & \norm{u-u_m}_{\mathcal C([0,T];H^1_0(\Omega))} + \norm{u-u_m}_{\mathcal C^1([0,T];L^2(\Omega))} \nonumber \\
& \leq &
C\left(\norm{u_0-u_{0,m}}_{H_0^1(\Omega)}+\norm{u_1-u_{1,m}}_{L^2(\Omega)}+\norm{F-F_m}_{L^1(0,T;L^2(\Omega))}\right), \label{en9}
\end{eqnarray}
by virtue of \eqref{eq:reg1}. Further, with reference to \eqref{en0} we get for all $t \in [0,T]$ that
\begin{eqnarray*}
& & E(t)-E_m(t) \\
& = & \int_{\Omega} \left( 
\frac{1}{\rho} \sum_{i,j=1}^n a_{ij} \left( \partial_i(u-u_m) \partial_j u + \partial_i u_m \partial_j (u-u_m) \right) 
- \partial_t(u-u_m) \left( \partial_t(u-u_m) - 2 \partial_t u \right) \right) dx,
\end{eqnarray*}
hence
\begin{eqnarray*}
& & \abs{E(t)-E_m(t)} \\
& \le &  \frac{M}{\rho_0} \left( 2 \norm{u}_{\mathcal C([0,T];H^1_0(\Omega))} + \norm{u-u_m}_{\mathcal C([0,T];H^1_0(\Omega))} \right) \norm{u-u_m}_{\mathcal C([0,T];H^1_0(\Omega))} \\
& & + \left( 2 \norm{u}_{\mathcal C^1([0,T];L^2(\Omega))} + \norm{u-u_m}_{\mathcal C^1([0,T];L^2(\Omega))} \right) \norm{u-u_m}_{\mathcal C^1([0,T];L^2(\Omega))}.
\end{eqnarray*}
Finally, putting this together with \eqref{eq:reg1}, \eqref{en7} and \eqref{en9}, we obtain \eqref{en2} by sending $m$ to infinity in \eqref{en8}, which
terminates the proof of Lemma \ref{lem:EE}.
\end{proof}

\section{Carleman estimate}
\label{sec:CE}

In this section we design a Carleman inequality specifically for the system described by \eqref{sy1}. 
This estimate is the main tool for the analysis of the inverse problems under investigation in this article and we shall derive it from a suitable Carleman inequality for the wave equation, that we establish beforehand in the following subsection. We point out that since there is no zero trace condition at final time $t=T$ in \eqref{sy1}, it is more appropriate for the treatment of the inverse problems discussed in Section \ref{sec:pr-thm:stab} to follow the same path as in the derivation of \cite[Lemma 1]{HIY20} by incorporating
the trace term into the right-hand side of the Carleman inequality.


\subsection{A hyperbolic Carleman inequality}

Let $x_0 \in \R^n \setminus \overline{\Omega}$. We consider a sub-boundary $\Gamma_0 \subset \partial\Omega$ such that
\begin{equation}
\label{con:Gamma0}
\{x\in \partial\Omega;\ (x-x_0)\cdot \nu \geq 0\} \subset \Gamma_0
\end{equation}
and we put $\tilde{\Sigma}_0 := \Gamma_0\times (-T,T)$.

Next for $\beta \in (0,1)$ and $\lambda \in (0,+\infty)$ we introduce the following quadratic weight 
function (which is quite usual in the theory of Carleman estimates for the wave equation, 
see e.g., \cite{BY17}),
\begin{equation}
\label{con:varphi}
\varphi(x,t) := e^{\lambda \psi(x,t)}\ \mbox{where}\ \psi(x,t) := \abs{x-x_0}^2 - \beta t^2
\quad (x,t) \in \tilde{Q}:= \Omega \times (-T,T).
\end{equation}
Notice that $\varphi$ depends on the two parameters $\beta$ and $\lambda$, and should rather be 
denoted $\varphi_{\beta,\lambda}$, but for the sake of notational simplicity, 
we omit these dependences. 

Keeping in mind the application to inverse problems in Section \ref{sec:pr-thm:stab}, where we want to avoid the inadequate use of additional initial data $u_j$, $j=0,1$, we aim for a hyperbolic Carleman estimate on the time interval $(-T,T)$, rather than $(0,T)$. 

\begin{lem}
\label{lem:ce-hyper}
Assume that $B=(b_1,\ldots,b_n)^T \in L^\infty(\Omega)^n$ and that $c \in L^\infty(\Omega)$.Then there exists $\lambda_0 > 0$ such that
for all $\lambda \ge \lambda_0$, there exist two positive constants $s_0=s_0(\lambda)$ and $C=C(\lambda)$ such that for all $s \geq s_0$, we have
\begin{equation}
\label{ce-hyp}
s \norm{e^{s\varphi}\nabla_{x,t} u}_{L^2(\tilde{Q})}^2 + s^3  \norm{e^{s\varphi} u}_{L^2(\tilde{Q})}^2
\leq C \left( \norm{e^{s\varphi} L_0 u}_{L^2(\tilde{Q})}^2 + s \norm{e^{s\varphi} \partial_\nu u}_{L^2(\tilde{\Sigma}_0)} ^2
+ \sum_{\tau=\pm T} \mathcal{E}_{s,\tau}(u) \right) 
\end{equation}
provided $u \in \mathcal C([-T,T];H_0^1(\Omega)) \cap \mathcal C^1([-T,T];L^2(\Omega))$ satisfies 
$L_0 u :=  \partial_t^2 u - \Delta u + B \cdot \nabla u + c u \in L^2(\tilde{Q})$.
Here we have set:
$$ \mathcal{E}_{s,\tau}(u) := s \norm{e^{s\varphi(\cdot,\tau)} \nabla_{x,t} u(\cdot,\tau)}_{L^2(\Omega)}^2 + s^3 \norm{e^{s\varphi(\cdot,\tau)} u(\cdot,\tau)}_{L^2(\Omega)}^2,\ \tau = \pm T. $$
\end{lem}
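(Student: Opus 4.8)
The plan is to establish \eqref{ce-hyp} first for the principal part $P_0 := \partial_t^2 - \Delta$ of $L_0$, and then to absorb the first- and zeroth-order perturbations $B\cdot\nabla u + cu$ into the left-hand side by enlarging $s$. For the principal part I would follow the classical route for Carleman estimates for the wave operator (as in, e.g., \cite{BY17}): introduce the conjugated function $w := e^{s\varphi} u$ and expand the conjugated operator $P_\varphi w := e^{s\varphi} P_0 (e^{-s\varphi} w)$. A direct computation writes $P_\varphi = P_+ + P_-$, where $P_+$ collects the formally self-adjoint terms (namely $\partial_t^2 w - \Delta w$ together with the multiplication term $s^2(\varphi_t^2 - |\nabla\varphi|^2)w$) and $P_-$ collects the formally skew-adjoint part (the first-order terms $-2s\varphi_t\partial_t w + 2s\nabla\varphi\cdot\nabla w$, plus the multiplication term proportional to $s P_0\varphi$). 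Then
\[
\norm{e^{s\varphi} P_0 u}_{L^2(\tilde{Q})}^2 = \norm{P_\varphi w}_{L^2(\tilde{Q})}^2
= \norm{P_+ w}_{L^2(\tilde{Q})}^2 + \norm{P_- w}_{L^2(\tilde{Q})}^2 + 2\,\langle P_+ w, P_- w\rangle_{L^2(\tilde{Q})},
\]
and the whole analysis reduces to extracting, from the cross term $2\langle P_+ w, P_- w\rangle$, the positive bulk quantity $s\norm{\nabla_{x,t} w}_{L^2(\tilde{Q})}^2 + s^3 \norm{w}_{L^2(\tilde{Q})}^2$ up to controllable boundary contributions.

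Next I would integrate by parts in $2\langle P_+ w, P_- w\rangle_{L^2(\tilde{Q})}$. The crucial input is the choice of weight $\psi(x,t) = |x-x_0|^2 - \beta t^2$ with $\beta \in (0,1)$: since the wave speed equals $1$ and $\beta < 1$, this $\psi$ is pseudoconvex with respect to $P_0$, so that for $\lambda$ sufficiently large the bulk terms produced by the integration by parts are bounded below by $c\,(s\norm{\nabla_{x,t}w}_{L^2(\tilde{Q})}^2 + s^3\norm{w}_{L^2(\tilde{Q})}^2)$ for some $c>0$, modulo terms of lower order in $s$ that are absorbed for $s \geq s_0(\lambda)$. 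Concretely, the spatial Hessian $\nabla_x^2\psi = 2I$ is positive, the factor $\lambda e^{\lambda\psi}$ makes the gradient contributions dominate, and the condition $\beta<1$ guarantees the correct sign in the mixed space--time contributions. This positivity step is the heart of the estimate and the part I expect to require the most care.

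The remaining task is the bookkeeping of the boundary terms generated by the integrations by parts, which live on the lateral boundary $\partial\Omega\times(-T,T)$ and on the time slices $\Omega\times\{\pm T\}$. On the lateral boundary I would use that $u=0$ on $\Sigma$, whence $w=0$ and all tangential derivatives of $w$ vanish there, so that each lateral integrand reduces to a multiple of $|\partial_\nu w|^2$ weighted by $\partial_\nu\varphi = \lambda\varphi\,\partial_\nu\psi = 2\lambda\varphi\,(x-x_0)\cdot\nu$. By the geometric condition \eqref{con:Gamma0}, the portion of $\partial\Omega$ where $(x-x_0)\cdot\nu \geq 0$ is contained in $\Gamma_0$; on its complement one has $(x-x_0)\cdot\nu < 0$, so there the boundary term carries the favorable sign and may be discarded, while the contribution over $\Gamma_0$ is moved to the right-hand side and, after rewriting $\partial_\nu w$ in terms of $\partial_\nu u$, bounded by $C s\norm{e^{s\varphi}\partial_\nu u}_{L^2(\tilde{\Sigma}_0)}^2$. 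The terms on $\Omega\times\{\pm T\}$, where no such vanishing is available, are retained on the right-hand side as the quantities $\mathcal{E}_{s,\tau}(u)$, following the device of \cite[Lemma 1]{HIY20}. Collecting these estimates yields \eqref{ce-hyp} for $P_0$; finally, since $\norm{e^{s\varphi}(B\cdot\nabla u + cu)}_{L^2(\tilde{Q})}^2 \leq C(\norm{e^{s\varphi}\nabla u}_{L^2(\tilde{Q})}^2 + \norm{e^{s\varphi}u}_{L^2(\tilde{Q})}^2)$, these lower-order terms are absorbed by the left-hand side of \eqref{ce-hyp} upon enlarging $s_0$, which completes the proof for $L_0$.
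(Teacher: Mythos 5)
Your conjugation argument (split $e^{s\varphi}P_0 e^{-s\varphi}$ into symmetric and antisymmetric parts, extract positivity from the cross term via pseudoconvexity of $\psi$, handle lateral boundary terms through \eqref{con:Gamma0}, keep the $t=\pm T$ slices as $\mathcal{E}_{s,\tau}(u)$, absorb $B\cdot\nabla u+cu$ for large $s$) is the standard derivation and is, in substance, exactly what underlies the paper's Step 1: the paper does not redo this computation but cites \cite[Theorem 4.2]{BY17}, merely noting that the trace terms at $t=\pm T$ must be retained on the right-hand side, as in \cite{HIY20}. So for sufficiently regular $u$ your outline is sound.

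The genuine gap is the regularity class. The lemma asserts \eqref{ce-hyp} for $u \in \mathcal C([-T,T];H_0^1(\Omega)) \cap \mathcal C^1([-T,T];L^2(\Omega))$ with only $L_0 u \in L^2(\tilde{Q})$. For such $u$, the quantities your computation manipulates ($\partial_t^2 w$, $\Delta w$, mixed second derivatives of $w=e^{s\varphi}u$) are not functions and the integrations by parts are not justified; even the term $\partial_\nu u$ on the right-hand side of \eqref{ce-hyp} exists in $L^2(\tilde{\Sigma}_0)$ only by a hidden-regularity theorem (Remark \ref{rem1} of the paper, citing \cite[Theorem 4.1]{L}), not by the assumed regularity. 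Your argument therefore proves the estimate only for, say, $u\in H^2(-T,T;L^2(\Omega))\cap L^2(-T,T;H^2(\Omega)\cap H_0^1(\Omega))$, which is precisely the paper's Step 1. What is missing is the paper's Step 2: set $a=u(\cdot,-T)$, $b=\partial_t u(\cdot,-T)$, $F=L_0u$, approximate $(a,b,F)$ by smooth compactly supported data, solve the corresponding wave equations to obtain regular solutions $u_m$, apply the smooth-case estimate to each $u_m$, and pass to the limit. This limit passage is itself nontrivial: to show that the remainder terms vanish, in particular $s\norm{e^{s\varphi}\partial_\nu (u-u_m)}_{L^2(\tilde{\Sigma}_0)}^2\to 0$, one needs the continuity of $(a,b,F)\mapsto \partial_\nu u$ from $H_0^1(\Omega)\times L^2(\Omega)\times L^2(\tilde{Q})$ into $L^2(\tilde{\Sigma})$, i.e.\ the hidden-regularity estimate \cite[Lemma 3.6]{BY17}, together with the energy estimate \cite[Lemma 3.2]{BY17}. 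Without this density-plus-trace argument the lemma is not established in the class in which it is stated, and it is exactly this low-regularity class that is needed later, since the solutions of \eqref{sy1} fed into Proposition \ref{pr:ce} have no more than finite-energy regularity.
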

\begin{proof} 
{\it Step 1.} 
Assume that $u\in H^2(-T,T;L^2(\Omega)) \cap L^2(-T,T;H^2(\Omega)\cap H_0^1(\Omega))$.
Interpolating, we have $u \in H^1(-T,T;H_0^1(\Omega))$. 
Therefore we obtain \eqref{ce-hyp}
directly from \cite[Theorem 4.2]{BY17}. Notice that unlike \cite[Theorem 4.2]{BY17} where it is assumed that $u(\cdot, \pm T) = \partial_t u(\cdot, \pm T) = 0$, there are actual trace terms at $t=\pm T$ on the right-hand side of 
\eqref{ce-hyp}, which are gathered in the sum $\mathcal{E}_{s,-T}(u)+\mathcal{E}_{s,T}(u)$ (these quantities 
arise from the integrations by parts performed in the proof of \cite[Lemmas 4.2 and 4.3]{BY17}).

\noindent {\it Step 2.} Let us now suppose that $u\in \mathcal C^1([-T,T]; L^2(\Omega))\cap \mathcal C([-T,T]; H_0^1(\Omega))$ satisfies $L_0 u\in L^2(\tilde{Q})$. Set
$a := u(\cdot,-T) \in H_0^1(\Omega)$, $b := \partial_t u(\cdot,-T) \in L^2(\Omega)$ and $F := L_0 u \in L^2(\tilde{Q})$. Then, by density of $\mathcal C^\infty_{c}(\Omega)$ in $H_0^1(\Omega)$ and in $L^2(\Omega)$, and of $\mathcal C^\infty_{c}(\tilde{Q})$ in $L^2(\tilde{Q})$, we pick three sequences $\{a_m\}_{m=1}^{\infty} \in \mathcal C^\infty_{c}(\Omega)^\N$, $\{b_m\}_{m=1}^{\infty} \in \mathcal C^\infty_{c}(\Omega)^\N$ and $\{F_m\}_{m=1}^{\infty} \subset \mathcal C^\infty_{c}(\tilde{Q})^\N$ such that
\begin{equation}
\label{prf-eq2}
\lim_{m\to+\infty} \|a - a_m\|_{H_0^1(\Omega)} = 0, \quad
\lim_{m\to+\infty} \|b - b_m\|_{L^2(\Omega)} = 0, \quad
\lim_{m\to+\infty} \|F - F_m\|_{L^2(\tilde{Q})} = 0.
\end{equation}
For $m \in \N$, we denote by $u_m$ the solution to the following initial-boundary value problem
\begin{equation}
\label{phce0}
\left\{
\begin{aligned}
& L_0 u_m = F_m                                                                           &&\quad \mbox{in}\ \tilde{Q}\\
& u_m = 0                                                                                      &&\quad \mbox{on}\ \tilde{\Sigma} := \partial \Omega \times (-T,T)\\
& u_m(\cdot,-T) = a_m,\ \partial_t u_m(\cdot,-T) = b_m                  &&\quad \mbox{in}\  \Omega.
\end{aligned}
\right.
\end{equation}
Then we have $u_m \in \mathcal C([-T,T];H^2(\Omega) \cap H_0^1(\Omega))\cap\mathcal C^1([-T,T];H^1_0(\Omega))\cap \mathcal C^2([-T,T];L^2(\Omega))$ by Theorem \ref{thm:FP} with $q=0$, and consequently 
$$
s \norm{e^{s\varphi} \nabla_{x,t} u_m}_{L^2(\tilde{Q})}^2 + s^3 \norm{e^{s\varphi} u_m}_{L^2(\tilde{Q})}^2 
\leq C \left( \norm{e^{s\varphi} F_m}_{L^2(\tilde{Q})}^2 + s \norm{e^{s \varphi} \partial_\nu u_m}_{L^2(\tilde \Sigma_0)}^2 + \sum_{\tau=\pm T} \mathcal{E}_{s,\tau}(u_m) \right)
$$
for all $s \ge s_0$, from {\it Step 1}. Putting this together with the basic inequality
\begin{eqnarray*}
& & s \norm{e^{s\varphi} \nabla_{x,t} u}_{L^2(\tilde{Q})}^2 + s^3 \norm{e^{s\varphi} u}_{L^2(\tilde{Q})}^2 \\
& \leq & 2 \left( s \norm{e^{s\varphi} \nabla_{x,t} u_m}_{L^2(\tilde{Q})}^2 + s^3 \norm{e^{s\varphi} u_m}_{L^2(\tilde{Q})}^2 + s \norm{e^{s\varphi} \nabla_{x,t} (u-u_m)}_{L^2(\tilde{Q})}^2 + s^3 \norm{e^{s\varphi} (u-u_m)}_{L^2(\tilde{Q})}^2 \right)
\end{eqnarray*}
we get that
\begin{eqnarray*}
& & s \norm{e^{s\varphi} \nabla_{x,t} u}_{L^2(\tilde{Q})}^2 + s^3 \norm{e^{s\varphi} u}_{L^2(\tilde{Q})}^2 \\
& \leq &  C \left( \norm{e^{s\varphi} F_m}_{L^2(\tilde{Q})}^2 + s \norm{e^{s \varphi} \partial_\nu u_m}_{L^2(\tilde \Sigma_0)}^2 + \mathcal{E}_{s}(u_m) + s \norm{e^{s\varphi} \nabla_{x,t} (u-u_m)}_{L^2(\tilde{Q})}^2 + s^3 \norm{e^{s\varphi} (u-u_m)}_{L^2(\tilde{Q})}^2 \right)
\end{eqnarray*}
for all $s \geq s_0$. 
Here and in the remaining part of this proof, $C$ denotes a generic positive constant, 
independent of $s$ and $m$, which may change from line to line, and for the sake of 
notational simplicity we write $\mathcal{E}_{s}$ instead of $\sum_{\tau=\pm T} \mathcal{E}_{s,\tau}$. 
As a consequence we have
\begin{equation}
\label{phce1}
s \norm{e^{s\varphi} \nabla_{x,t} u}_{L^2(\tilde{Q})}^2 + s^3 \norm{e^{s\varphi} u}_{L^2(\tilde{Q})}^2 \leq 
C \left( \norm{e^{s\varphi} F}_{L^2(\tilde{Q})}^2 + s \norm{e^{s \varphi} \partial_\nu u}_{L^2(\tilde \Sigma_0)}^2 +  \mathcal{E}_{s}(u) + \mathcal{R}_s(m) \right)
\end{equation}
for all $s \ge s_0$ and all $m \in \N$, where
\begin{eqnarray*}
\mathcal{R}_s(m) & := & s \norm{e^{s\varphi} \nabla_{x,t} (u-u_m)}_{L^2(\tilde{Q})}^2 + s^3 \norm{e^{s\varphi} (u-u_m)}_{L^2(\tilde{Q})}^2 + s \norm{e^{s \varphi} \partial_\nu (u-u_m)}_{L^2(\tilde \Sigma_0)}^2 \nonumber \\
& & + \mathcal{E}_{s}(u-u_m) + \norm{e^{s\varphi} (F-F_m)}_{L^2(\tilde{Q})}^2. 
\end{eqnarray*}
Thus we are left with the task of proving that $\mathcal{R}_s(m)$ tends to zero as $m$ goes to $+\infty$.
To do that, we use that the weight function $\varphi$ is bounded on the closure of $\tilde{Q}$ by a positive constant independent of $s$, and get that
\begin{eqnarray}
\mathcal{R}_s(m) & \le & e^{sC} \left( s \norm{\nabla_{x,t} (u-u_m)}_{L^2(\tilde{Q})}^2 + s^3 \norm{u-u_m}_{L^2(\tilde{Q})}^2 + s \norm{\partial_\nu (u-u_m)}_{L^2(\tilde \Sigma_0)}^2 \right. \nonumber \\
& & \left. + \sum_{\tau=\pm T} \left( s \norm{\nabla_{x,t}(u-u_m)(\cdot,\tau)}_{L^2(\Omega)}^2 + s^3 \norm{(u-u_m)(\cdot,\tau)}_{L^2(\Omega)}^2 \right) + \norm{F-F_m}_{L^2(\tilde{Q})}^2 \right). \label{prf-eq4}
\end{eqnarray}
Next we have 
\begin{equation*}
\left\{
\begin{aligned}
& L_0 (u - u_m) = F-F_m                                                                   &&\quad \mbox{in}\ \tilde{Q}\\
& u - u_m = 0                                                                                    &&\quad \mbox{on}\ \tilde{\Sigma}\\
& (u-u_m)(\cdot,-T) = a-a_m, \ \partial_t (u-u_m)(\cdot,-T) = b-b_m    &&\quad \mbox{in}\ \Omega,
\end{aligned}
\right.
\end{equation*}
according to \eqref{phce0}, hence
$u-u_m \in \mathcal C([-T,T];H_0^1(\Omega)) \cap \mathcal C^1([-T,T];L^2(\Omega))$ satisfies
\begin{eqnarray}
& & \norm{u-u_m}_{\mathcal C([-T,T];H^1_0(\Omega))} + \norm{\partial_t(u-u_m)}_{\mathcal C([-T,T];L^2(\Omega))} \nonumber \\
& \leq &
C\left(\norm{a-a_m}_{H^1(\Omega)}+\norm{b-b_m}_{L^2(\Omega)}+\norm{F-F_m}_{L^2(\tilde{Q})}\right), \label{prf-eq5}
\end{eqnarray}
by \cite[Lemma 3.2]{BY17}.
Moreover we have $\partial_\nu (u-u_m) \in L^2(\tilde{\Sigma})$ and
\begin{equation}
\label{prf-eq6}
\norm{\partial_\nu (u-u_m)}_{L^2(\tilde{\Sigma})} 
\leq 
C\left(\norm{a-a_m}_{H^1(\Omega)}+\norm{b-b_m}_{L^2(\Omega)}+\norm{F-F_m}_{L^2(\tilde{Q})}\right)
\end{equation}
from \cite[Lemma 3.6]{BY17}. 
Inserting \eqref{prf-eq5}-\eqref{prf-eq6} into \eqref{prf-eq4} then yields
$$
\mathcal{R}_s(m) \le C (1+s+s^3) e^{sC} \left( \norm{a-a_m}_{H_0^1(\Omega)}^2+\norm{b-b_m}_{L^2(\Omega)}^2+\norm{F-F_m}_{L^2(\tilde{Q})}^2 \right),\ n \in \N.
$$
This and \eqref{prf-eq2} entail that $\lim_{m \to +\infty} \mathcal{R}_s(m)=0$ so the desired result follows by sending $m$ to $+\infty$ in \eqref{phce1}.
\end{proof}

\begin{rem} 
\label{rem1}
We have $\partial_\nu u \in L^2(\tilde{\Sigma})$ according to \cite[Theorem 4.1]{L}, whenever $u$ satisfies the conditions of Lemma \ref{lem:ce-hyper}.
\end{rem}

Having established \eqref{ce-hyp}, we are now in position to design a Carleman estimate for \eqref{sy1}.

\subsection{Taking the time-fractional damping term into account}

Our strategy here is to regard the time-fractional damping term as a source term. That is to say that we aim to apply Lemma \ref{lem:ce-hyper} to the first equation of \eqref{sy1} brought into the form $L_0 u = F - q \partial_t^\alpha u$ by moving the time-fractional term $q \partial_t^\alpha u$ to its right-hand side. 
This requires that the solution $u$ to \eqref{sy1} be extended to the time interval $(-T,T)$ and that the time-fractional term $\partial_t^{\alpha} u$ be appropriately estimated.

We shall proceed in three steps: The first step is to estimate $\partial_t^{\alpha} u$ in terms of $\partial_t u$, the second one is to time-symmetrize $u$, and finally the last step is to deduce the desired Carleman inequality from Lemma \ref{lem:ce-hyper} and the two preceding steps.


\subsubsection{Estimation of the time-fractional damping term}

The estimate we have in mind is as follows.

\begin{lem}
\label{lem:frac}
There exists a constant $C = C(T)>0$ such that for all $u \in H^1(0,T;L^2(\Omega))$, all $\lambda>0$ and all $s \geq 0$, we have
$$
\norm{e^{s\varphi} \partial_t^\alpha u}_{L^2(Q)} \leq C \norm{e^{s\varphi} \partial_t u}_{L^2(Q)}.
$$
\end{lem}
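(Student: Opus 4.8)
The plan is to exploit the time-monotonicity of the Carleman weight to convert the weighted fractional derivative into a convolution that can be controlled by Young's inequality. First I would write out the pointwise Caputo derivative
$$
\partial_t^\alpha u(x,t) = \frac{1}{\Gamma(1-\alpha(x))}\int_0^t (t-\tau)^{-\alpha(x)}\,\partial_\tau u(x,\tau)\,d\tau,
$$
multiply by $e^{s\varphi(x,t)}$, and move the weight inside the time integral. The goal is then to replace the factor $e^{s\varphi(x,t)}$, which sits outside the $\tau$-integration, by $e^{s\varphi(x,\tau)}$ so that the right-hand side becomes a convolution of the \emph{weighted} first derivative $e^{s\varphi(x,\cdot)}\partial_\tau u(x,\cdot)$ against the singular kernel $\tau\mapsto \tau^{-\alpha(x)}/\Gamma(1-\alpha(x))$.

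The crucial observation is the monotonicity of $\varphi$ in the time variable. Since $\psi(x,t)=|x-x_0|^2-\beta t^2$ with $\beta\in(0,1)$, for each fixed $x$ the map $t\mapsto\varphi(x,t)=e^{\lambda\psi(x,t)}$ is non-increasing on $(0,T)$; consequently, for $0\le\tau\le t\le T$ and $s\ge 0$ one has $s\varphi(x,t)\le s\varphi(x,\tau)$, whence $e^{s\varphi(x,t)}\le e^{s\varphi(x,\tau)}$. Inserting this pointwise bound into the integral yields
$$
e^{s\varphi(x,t)}\left|\partial_t^\alpha u(x,t)\right|
\le \frac{1}{\Gamma(1-\alpha(x))}\int_0^t (t-\tau)^{-\alpha(x)}\,e^{s\varphi(x,\tau)}\left|\partial_\tau u(x,\tau)\right|\,d\tau,
$$
which is exactly the convolution I was aiming for. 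I expect this monotonicity step to be the heart of the argument: it is the only place where the specific quadratic-in-$t$ structure of $\psi$ (and the sign conditions $\beta>0$, $s\ge 0$) is used, and it is what lets the weight pass through the nonlocal operator without producing any remainder term.

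With the convolution in hand, I would fix $x\in\Omega$ and apply Young's convolution inequality in the $t$-variable, giving
$$
\norm{e^{s\varphi(x,\cdot)}\partial_t^\alpha u(x,\cdot)}_{L^2(0,T)}
\le \left(\int_0^T \frac{\sigma^{-\alpha(x)}}{\Gamma(1-\alpha(x))}\,d\sigma\right)\norm{e^{s\varphi(x,\cdot)}\partial_t u(x,\cdot)}_{L^2(0,T)},
$$
where the $L^1$-norm of the kernel equals $T^{1-\alpha(x)}/\Gamma(2-\alpha(x))$. Because $0<\alpha(x)\le\alpha_1<1$, the map $\alpha\mapsto T^{1-\alpha}/\Gamma(2-\alpha)$ is continuous on the compact interval $[0,\alpha_1]$ and hence bounded by a constant $C=C(T)$ that is independent of $x$ and, importantly, of $s$ and $\lambda$. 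Squaring, integrating over $x\in\Omega$, and invoking Tonelli's theorem to exchange the order of integration then gives
$$
\norm{e^{s\varphi}\partial_t^\alpha u}_{L^2(Q)}^2\le C(T)^2\,\norm{e^{s\varphi}\partial_t u}_{L^2(Q)}^2,
$$
which is the claimed estimate. The only mild subtlety to record is that both $\alpha$ and $\varphi$ depend on $x$, but since Young's inequality is applied slicewise in $t$ for each fixed $x$ and the kernel's $L^1$-norm is bounded uniformly in $x$, the $x$-dependence causes no difficulty.
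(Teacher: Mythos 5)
Your proposal is correct and follows essentially the same route as the paper's proof: the monotonicity of $t \mapsto \varphi(x,t)$ on $(0,T)$ (valid since $\beta>0$, $\lambda>0$, $s\geq 0$) is used to pass the weight inside the Caputo integral, and then Young's convolution inequality, applied slicewise in $t$ for each fixed $x$, yields the bound with the kernel's $L^1$-norm $T^{1-\alpha(x)}/\Gamma(2-\alpha(x))$, which is uniformly bounded in $x$ because $0<\alpha(x)\leq \alpha_1<1$ (the paper bounds it by $\max\{1,T\}/\min_{y\in(1,2)}\Gamma(y)$, matching your compactness argument).
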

\begin{proof}
For all $x \in \Omega$, $t \mapsto \varphi(x,t)$ is a decreasing function in $(0,T)$, hence
$e^{2s\varphi(x,t)} \leq e^{2 s \varphi(x,\tau)}$ for all $t \in (0,T)$ and all $\tau \in (0,t)$. Thus we have
\begin{eqnarray*}
\norm{e^{s\varphi} \partial_t^\alpha u}_{L^2(Q)}^2 
&= & \int_Q \frac{e^{2s\varphi(x,t)}}{\Gamma(1-\alpha)^2} \abs{\int_0^t (t-\tau)^{-\alpha} \partial_\tau u(x,\tau)d\tau}^2 dxdt 
\nonumber \\
&\leq & \int_Q \frac{1}{\Gamma(1-\alpha)^2} \left( \int_0^t (t-\tau)^{-\alpha} \abs{\partial_\tau u(x,\tau)} e^{s\varphi(x,t)} d\tau \right)^2 dxdt \nonumber \\
&\leq & \int_Q \frac{1}{\Gamma(1-\alpha)^2} \left( \int_0^t (t-\tau)^{-\alpha} \abs{\partial_\tau u(x,\tau)} e^{s\varphi(x,\tau)} d\tau \right)^2 dxdt
\end{eqnarray*}
by direct calculation, and Young's convolution inequality then yields
\begin{eqnarray*}
\norm{e^{s\varphi} \partial_t^\alpha u}_{L^2(Q)}^2&\leq & \int_Q \left(\frac{T^{1-\alpha}}{\Gamma(2-\alpha)}\right)^2 e^{2s\varphi(x,t)} \abs{\partial_t u(x,t)}^2  dxdt \\
& \leq & \left( \frac{\max \{ 1, T \}}{\min_{y \in (1,2)} \Gamma(y)} \right)^2 \norm{e^{s\varphi} \partial_t u}_{L^2(Q)}^2.
\end{eqnarray*}
\end{proof}

\subsubsection{Time-symmetrization}

Let $u$ be the $\mathcal C([0,T];H_0^1(\Omega)) \cap \mathcal C^1([0,T];L^2(\Omega))$-solution to 
\eqref{sy1}, defined by Theorem \ref{thm:FP}. 
Depending on whether the assumption \eqref{assump}\mbox{(\romannumeral1)} or 
\eqref{assump}\mbox{(\romannumeral2)} is fulfilled, 
we introduce the odd or the even extension of $t \mapsto u(\cdot,t)$ on $(-T,T)$, by setting
$$
u(x,t) := 
\left\{
\begin{array}{cl}
u(x,t) & \mbox{if}\ (x,t) \in \Omega \times [0,T) \\
\mp u(x,-t) & \mbox{if}\ (x,t) \in \Omega \times (-T,0).
\end{array} \right.
$$
Here and below the notation $\mp u$ means 
$-u$ in the case of \eqref{assump}\mbox{(\romannumeral1)} and 
$+u$ in the case of \eqref{assump}\mbox{(\romannumeral2)}. 

Then, in light of \eqref{assump} it is easy to check that 
$u \in \mathcal C([-T,T];H_0^1(\Omega)) \cap \mathcal C^1([-T,T];L^2(\Omega))$. 
Moreover, for a.e.
$t \in (-T,0)$ we have
$$
L_0 u(\cdot,t) = \mp L_0 u(\cdot,-t)
= \mp \left( F(\cdot,-t) - q \partial_t^\alpha u(\cdot,-t) \right)\quad \mbox{in}\ \Omega
$$ 
and
$$ 
u(\cdot,t) = \mp u(\cdot,-t) = 0\quad \mbox{on}\ \partial\Omega.
$$ 
Therefore, putting $G(x,t) := F(x,t) - q(x) \partial_t^\alpha u(x,t)$ for a.e. $x \in \Omega$ and all $t \in [0,T)$, and $$
G(x,t) := 
\left\{
\begin{array}{cl}
G(x,t) & \mbox{if}\ (x,t) \in \Omega \times [0,T)\\
\mp G(x,-t) & \mbox{if}\ (x,t) \in \Omega \times (-T,0),
\end{array}
\right.
$$
we end up getting that
\begin{equation}
\label{sy3}
\left\{
\begin{aligned}
& L_0 u = G 
                                                                        &&\quad \mbox{in}\ \tilde{Q}\\
& u = 0                        &&\quad \mbox{on}\ \tilde{\Sigma}\\
& u(\cdot,0) = u_0, \ \partial_t u(\cdot, 0) = u_1    &&\quad \mbox{in}\ \Omega.
\end{aligned}
\right.
\end{equation}
Notice that since it is enough to appropriately extend $t \mapsto G(\cdot,t)$ in $(-T,0)$ for the hyperbolic $L_0 u=G$ to hold in $\tilde{Q}$, we can entirely leave aside the more delicate problem of defining the time-fractional derivative $\partial_t^\alpha u$ for negative time values. 
\begin{rem} 
In a similar fashion to Remark \ref{rem1}, we have $\partial_\nu u \in L^2(\tilde{\Sigma})$ here. This can be seen from Lemma \ref{lem:frac} (with $s=0$) upon arguing as in the proof of \cite[Theorem 4.1]{L}.
\end{rem}

\subsubsection{Completion of the Carleman estimate}

Let us apply Lemma \ref{lem:ce-hyper} to the $\mathcal C([-T,T];H_0^1(\Omega)) \cap \mathcal C^1([-T,T];L^2(\Omega))$-solution $u$ to \eqref{sy3}. 
Taking into account that 
$\norm{e^{s \varphi} G}_{L^2(\tilde Q)}^2=2 \norm{e^{s \varphi} G}_{L^2(Q)}^2$, $\norm{e^{s \varphi} \partial_\nu u}_{L^2(\tilde \Sigma_0)}^2=2 \norm{e^{s \varphi} \partial_\nu u}_{L^2(\Sigma_0)}^2$ 
where we used the notation $\Sigma_0:= \Gamma_0 \times (0,T)$, and that 
$\mathcal{E}_{s,-T}(u)=\mathcal{E}_{s,T}(u)$, we get that
\begin{eqnarray*}
s \norm{e^{s \varphi} \nabla_{x,t} u}_{L^2(\tilde{Q})}^2 + s^3 \norm{e^{s \varphi} u}_{L^2(\tilde{Q})}^2
& \leq & C \left( \norm{e^{s \varphi} G}_{L^2(Q)}^2 + s \norm{e^{s \varphi} \partial_\nu u}_{L^2(\Sigma_0)}^2
+ \mathcal{E}_{s,T}(u) \right) \\
& \leq & C \left( \norm{e^{s \varphi} F}_{L^2(Q)}^2 + \norm{e^{s \varphi} \partial_t^\alpha u}_{L^2(Q)}^2 + s \norm{e^{s \varphi} \partial_\nu u}_{L^2(\Sigma_0)}^2
+ \mathcal{E}_{s,T}(u) \right)
\end{eqnarray*}
for all $\lambda \ge \lambda_0$ and $s \ge s_0$. 
Thus we have
$$
s \norm{e^{s \varphi} \nabla_{x,t} u}_{L^2(Q)}^2 + s^3 \norm{e^{s \varphi} u}_{L^2(Q)}^2 \\
\leq C \left( \norm{e^{s \varphi} F}_{L^2(Q)}^2 + \norm{e^{s \varphi} \partial_t u}_{L^2(Q)}^2 + s \norm{e^{s \varphi} \partial_\nu u}_{L^2(\Sigma_0)}^2
+ \mathcal{E}_{s,T}(u) \right)
$$
by Lemma \ref{lem:frac}, and consequently
$$
s \norm{e^{s \varphi} \nabla_{x,t} u}_{L^2(Q)}^2 + s^3 \norm{e^{s \varphi} u}_{L^2(Q)}^2 \\
\leq C \left( \norm{e^{s \varphi} F}_{L^2(Q)}^2 + s \norm{e^{s \varphi} \partial_\nu u}_{L^2(\Sigma_0)}^2
+ \mathcal{E}_{s,T}(u) \right)
$$
upon taking $s \ge s_1 :=\max(s_0, 2C)$ and possibly enlarging $C$. Therefore, bearing in mind for all $s \geq s_1$ and all $(x,t) \in Q$
that $s e^{2s \varphi(x,t)} \leq e^{Cs}$ for some positive constant $C$ which is independent of $s$, 
we obtain the following result.
\begin{pr}
\label{pr:ce}
Assume \eqref{assump}. 
Then, for all $\lambda \geq \lambda_0$, where $\lambda_0$ is the same as in Lemma \ref{lem:ce-hyper}, there exist two constants $s_1 >0$ and $C > 0$ such that the estimate
\begin{eqnarray*}
& & s \norm{e^{s \varphi} \nabla_{x,t} u}_{L^2(Q)}^2 + s^3 \norm{e^{s \varphi} u}_{L^2(Q)}^2 \\
& \leq & C \left( \norm{e^{s \varphi} F}_{L^2(Q)}^2 + e^{sC} \norm{\partial_\nu u}_{L^2(\Sigma_0)}^2
+ s \norm{e^{s \varphi(\cdot,T)} \nabla_{x,t} u(\cdot,T)}_{L^2(\Omega)}^2 + s^3 \norm{e^{s \varphi(\cdot,T)}  u(\cdot,T)}_{L^2(\Omega)}^2 \right)
\end{eqnarray*}
holds for all $s \geq s_1$. Here, $u$ is the solution to \eqref{sy1} associated with $(u_0,u_1,F) \in H_0^1(\Omega) \times L^2(\Omega) \times L^2(Q)$, defined by Theorem \ref{thm:FP}.
\end{pr}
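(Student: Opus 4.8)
The plan is to treat the non-local damping term $q\partial_t^\alpha u$ as a source and thereby reduce the whole estimate to the purely hyperbolic Carleman inequality of Lemma \ref{lem:ce-hyper}. Since that lemma lives on the symmetric interval $(-T,T)$ while the solution of \eqref{sy1} is only given on $[0,T]$, the first task is to continue $u$ to negative times. Under assumption \eqref{assump} I would use the odd extension when $u_0=0$ and the even extension when $u_1=0$; the vanishing of one Cauchy datum is exactly what keeps the extended function in $\mathcal C([-T,T];H_0^1(\Omega))\cap\mathcal C^1([-T,T];L^2(\Omega))$ and makes it solve a genuine hyperbolic problem $L_0u=G$ on $\tilde Q$, as recorded in \eqref{sy3}. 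The right-hand side is $G=F-q\partial_t^\alpha u$ on the positive-time half, extended with the matching parity. The point worth stressing is that only $G$, and not the fractional derivative itself, is continued to $t<0$, so one never needs to define $\partial_t^\alpha u$ for negative times.

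Next I would apply Lemma \ref{lem:ce-hyper} to the pair $(u,G)$ on $\tilde Q$. Because the weight $\varphi(x,t)=e^{\lambda(|x-x_0|^2-\beta t^2)}$ is even in $t$ and $u,G$ have a definite parity, each $L^2(\tilde Q)$- and $L^2(\tilde\Sigma_0)$-norm of a quantity whose square is even collapses to twice the corresponding $L^2(Q)$- or $L^2(\Sigma_0)$-norm, and the two endpoint energies satisfy $\mathcal E_{s,-T}(u)=\mathcal E_{s,T}(u)$. I would then split the source by the triangle inequality, $\norm{e^{s\varphi}G}_{L^2(Q)}\le\norm{e^{s\varphi}F}_{L^2(Q)}+\norm{q}_{L^\infty(\Omega)}\norm{e^{s\varphi}\partial_t^\alpha u}_{L^2(Q)}$, and invoke Lemma \ref{lem:frac} to replace the last term by $C\norm{e^{s\varphi}\partial_t u}_{L^2(Q)}$.

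The decisive step is the absorption of this first-order contribution into the left-hand side. Since $\partial_t u$ is one component of $\nabla_{x,t}u$, one has $\norm{e^{s\varphi}\partial_t u}_{L^2(Q)}^2\le\norm{e^{s\varphi}\nabla_{x,t}u}_{L^2(Q)}^2$, while the left-hand side carries the gradient term with a coefficient $s$. Because Lemma \ref{lem:frac} supplies a constant $C$ \emph{independent of} $s$, choosing $s\ge s_1:=\max(s_0,2C)$ gives $C\norm{e^{s\varphi}\partial_t u}_{L^2(Q)}^2\le\tfrac12\,s\norm{e^{s\varphi}\nabla_{x,t}u}_{L^2(Q)}^2$, which can be moved to the left at the cost of halving its coefficient. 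This is precisely the spot where the scheme could break down for a non-local perturbation with a worse $s$-behaviour; it survives here only because $t\mapsto\varphi(x,t)$ is decreasing on $(0,T)$, which is what renders the Young-convolution bound of Lemma \ref{lem:frac} uniform in $s$. I expect this uniform control of the fractional term by $\partial_t u$, so that the gain of the large Carleman parameter is not consumed, to be the main obstacle.

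Finally I would tidy the remaining boundary and endpoint terms. Since $\varphi$ is bounded on $\overline Q$ by a constant independent of $s$, one has $s\,e^{2s\varphi(x,t)}\le e^{Cs}$ for $s\ge s_1$, which converts the weighted Neumann term $s\norm{e^{s\varphi}\partial_\nu u}_{L^2(\Sigma_0)}^2$ into $e^{sC}\norm{\partial_\nu u}_{L^2(\Sigma_0)}^2$ and leaves a single endpoint energy, written out as $s\norm{e^{s\varphi(\cdot,T)}\nabla_{x,t}u(\cdot,T)}_{L^2(\Omega)}^2+s^3\norm{e^{s\varphi(\cdot,T)}u(\cdot,T)}_{L^2(\Omega)}^2$. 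Collecting these estimates yields the announced inequality, with $C$ and $s_1$ depending only on the data already controlled through Lemma \ref{lem:ce-hyper} and Lemma \ref{lem:frac}.
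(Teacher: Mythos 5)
Your proposal is correct and follows essentially the same route as the paper's own proof: the odd/even time-symmetrization dictated by \eqref{assump}, treating $q\partial_t^\alpha u$ as a source in $L_0 u = G$ on $\tilde{Q}$ (extending only $G$, never the fractional derivative, to negative times), the $s$-uniform bound of Lemma \ref{lem:frac} (which, as you note, rests on the monotonicity of $t\mapsto\varphi(x,t)$ on $(0,T)$), absorption of the resulting $\partial_t u$ term into the left-hand side for $s\ge s_1=\max(s_0,2C)$, and finally the bound $s\,e^{2s\varphi}\le e^{Cs}$ to strip the weight off the Neumann term. Even the choice of $s_1$ and the parity identities $\norm{e^{s\varphi}G}_{L^2(\tilde Q)}^2=2\norm{e^{s\varphi}G}_{L^2(Q)}^2$, $\mathcal{E}_{s,-T}(u)=\mathcal{E}_{s,T}(u)$ coincide with the paper's argument, so there is nothing to add.
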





\section{Proof of Theorems \ref{thm:stab1} and \ref{thm:stab}}
\label{sec:pr-thm:stab}

Inspired by \cite{HIY20}, we prove in this section the two main results of this article, stated in Theorems \ref{thm:stab1} and \ref{thm:stab}. 

Prior to doing that we recall that $\Gamma_0 \subset \Gamma$ satisfies the condition \eqref{con:Gamma0} for some $x_0 \in \R^n \setminus \overline{\Omega}$. Next we introduce
\begin{equation}
\label{def-d0&1}
d_0 :=  \min_{x \in \overline{\Omega}} \abs{x-x_0}\ \mbox{and}\ d_1 :=  \max_{x \in \overline{\Omega}} \abs{x-x_0},
\end{equation}
set
$$ T_0 := \sqrt{2(d_1^2-d_0^2)} $$
and we pick $\beta$ so large in $(0,1)$ that the following inequality
\begin{equation}
\label{con:beta}
\beta T^2 \ge 2(d_1^2-d_0^2)
\end{equation}
holds whenever $T>T_0$.

We start by showing Theorem \ref{thm:stab1}.

\subsection{Proof of Theorem \ref{thm:stab1}}

Let us apply Proposition \ref{pr:ce} with $\lambda=\lambda_0$ to the solution $u$ of \eqref{sy1}. We find for all $s \geq s_1$ that 
\begin{eqnarray}
& & s \norm{e^{s \varphi} \nabla_{x,t} u}_{L^2(Q)}^2 \nonumber \\
& \leq & 
C \left( \norm{e^{s \varphi} F}_{L^2(Q)}^2 + e^{C s} \norm{\partial_\nu u}_{L^2(\Sigma_0)}^2 
+ s \norm{e^{s \varphi(\cdot,T)} \nabla_{x,t} u(\cdot,T)}_{L^2(\Omega)}^2 + s^3 \norm{e^{s \varphi(\cdot,T)} u(\cdot,T)}_{L^2(\Omega)}^2 \right) \nonumber \\ 
& \le & C \left( e^{C s} \left( \norm{F}_{L^2(Q)}^2 +  \norm{\partial_\nu u}_{L^2(\Sigma_0)}^2 \right)
+ e^{M_1 s} \left( s \norm{\nabla_{x,t} u(\cdot,T)}_{L^2(\Omega)}^2 + s^3 \norm{u(\cdot,T)}_{L^2(\Omega)}^2 \right) \right).\label{p1.1}
\end{eqnarray}
Here, $C$ denotes a generic positive constant which is independent of $s$ and 
$M_1:=2 e^{\lambda_0 (d_1^2-\beta T^2+\beta_0)}$ where $d_1$ is defined in \eqref{def-d0&1}.
Next, bearing in mind that $u(\cdot,T) \in H_0^1(\Omega)$, we get by combining the Poincar\'e inequality with the energy estimate \eqref{en1} at final time $t=T$,
$\norm{\nabla_{x,t} u(\cdot,T)}_{L^2(\Omega)}^2  \leq C (E(0)+\norm{F}_{L^2(Q)}^2)$, 
that
\begin{equation}
\label{p1.2}
s \norm{\nabla_{x,t} u(\cdot,T)}_{L^2(\Omega)}^2 + s^3 \norm{u(\cdot,T)}_{L^2(\Omega)}^2 \leq C s^3 (E(0)+\norm{F}_{L^2(Q)}^2)
\end{equation}
for all $s \ge 1$. Here we used that $E(T)= \norm{\nabla_{x,t} u(\cdot,T)}_{L^2(\Omega)}^2$ in the framework of Theorem \ref{thm:stab1}. Thus, upon possibly enlarging $C$ in such a way that $s^3 e^{M_1 s} \leq e^{Cs}$ for $s \geq s_1$, we infer from \eqref{p1.1}-\eqref{p1.2} that
\begin{equation}
\label{eq6}
s \norm{e^{s\varphi} \nabla_{x,t} u}_{L^2(Q)}^2
\leq C \left( e^{Cs} \left( \norm{F}_{L^2(Q)}^2 + \norm{\partial_\nu u}_{L^2(\Sigma_0)}^2 \right) +
s^3 e^{M_1 s} E(0) \right),\ s \ge s_2:= \max \{ 1, s_1 \}.
\end{equation} 
On the other hand we have
\begin{equation}
\label{p1.3}
\norm{e^{s\varphi} \nabla_{x,t} u}_{L^2(Q)}^2
\geq \int_0^{\frac{T}{2}} \int_\Omega e^{2 s\varphi(x,t)} \abs{\nabla_{x,t} u(x,t)}^2 dxdt 
\geq e^{M_0 s }\int_0^\frac{T}{2} E(t) dt,
\end{equation}
where $M_0 := 2 e^{\lambda_0 \left( d_0^2 - \beta \frac{T^2}{4} + \beta_0 \right)}$, $d_0$ being the same as in \eqref{def-d0&1}.
Therefore, in light of \eqref{en2}, giving $\int_0^\frac{T}{2} E(t) dt \geq C E(0)-\norm{F}_{L^2(Q)}^2$, we deduce from \eqref{eq6}-\eqref{p1.3} that
\begin{equation}
\label{p1.4}
s e^{s M_0} \left(1-C s^2 e^{-s(M_0-M_1)} \right) E(0) 
\leq C e^{Cs} \left( \norm{F}_{L^2(Q)}^2 + \norm{\partial_\nu u}_{L^2(\Sigma_0 )}^2 \right),\ s \geq s_2.
\end{equation}
Further, since $M_0-M_1=M_0 \left(1 - e^{-\lambda_0 \left(3 \beta \frac{T^2}{4} - (d_1^2-d_0^2)\right)}\right) \geq M_0 \left(1-e^{-\lambda_0 \beta \frac{T^2}{4}} \right)>0$ from \eqref{con:beta}, 
we may choose $s \in [s_2,+\infty)$ so large in \eqref{p1.4} that
$$
E(0) \leq C \left( \norm{F}_{L^2(Q)}^2 + \norm{\partial_\nu u}_{L^2(\Sigma_0)}^2 \right).
$$
This and the Poincar\'e inequality yield Theorem \ref{thm:stab1}.


\subsection{Proof of Theorem \ref{thm:stab}}

We turn now to showing Theorem \ref{thm:stab} by mean of the Carleman estimate of Proposition \ref{pr:ce}. We proceed by applying the Bukhgeim-Klibanov method, see \cite{BK}, which requires the following technical result.

\begin{lem}
\label{lem:pe}
Let $\varphi$ be defined by \eqref{con:varphi}.
Then there exists a constant $C>0$, depending only on $\Omega$, $T$ and $\varphi$, such that for all $\lambda \geq 0$ and all $s \geq 0$, the following estimate
\begin{equation}
\label{eq2}
\norm{e^{s\varphi(\cdot,0)} \pa_t v(\cdot,0)}_{L^2(\Omega)}^2
\leq 
C \left( \norm{e^{s\varphi} \Box v}_{L^2(Q)}^2 +s \lambda \norm{e^{s\varphi} \nabla_{x,t} v}_{L^2(Q)}^2 +\norm{e^{s\varphi(\cdot,T)} \nabla_{x,t} v(\cdot,T)}_{L^2(\Omega)}^2 \right)
\end{equation}
holds whenever $v \in \mathcal C^1([0,T]; L^2(\Omega))\cap \mathcal C([0,T];H_0^1(\Omega))$ 
satisfies $\Box v := \partial_t^2 v - \Delta v \in L^2(Q)$. 
\end{lem}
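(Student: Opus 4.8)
The plan is to establish a weighted energy identity by the multiplier method and read off \eqref{eq2} from it. I introduce the weighted energy
$$
E_s(t) := \int_\Omega e^{2s\varphi(x,t)}\left( \abs{\partial_t v(x,t)}^2 + \abs{\nabla v(x,t)}^2 \right) dx, \quad t \in [0,T],
$$
and note at once that the left-hand side of \eqref{eq2} is dominated by $E_s(0)$, while $E_s(T) = \norm{e^{s\varphi(\cdot,T)} \nabla_{x,t} v(\cdot,T)}_{L^2(\Omega)}^2$ is exactly the trace term on the right-hand side. The whole point is therefore to control $E_s(0)-E_s(T)$ by the volume terms.

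I would multiply $\Box v = \partial_t^2 v - \Delta v$ by $e^{2s\varphi}\partial_t v$ and integrate over $Q$. Writing $\partial_t v\,\partial_t^2 v = \tfrac12\partial_t\abs{\partial_t v}^2$ and, after integrating by parts in space (the boundary term vanishing since $v$, hence $\partial_t v$, vanishes on $\Sigma$), $\nabla v\cdot\nabla\partial_t v = \tfrac12\partial_t\abs{\nabla v}^2$, a further integration by parts in $t$ yields the identity
$$
E_s(0) = E_s(T) - 2\int_Q e^{2s\varphi}\partial_t v\,\Box v\,dx\,dt - 2s\int_Q \partial_t\varphi\, e^{2s\varphi}\abs{\nabla_{x,t} v}^2\,dx\,dt + 4s\int_Q e^{2s\varphi}(\nabla\varphi\cdot\nabla v)\,\partial_t v\,dx\,dt.
$$
Here I would use $\partial_t\varphi = -2\beta\lambda t\,\varphi$ and $\nabla\varphi = 2\lambda(x-x_0)\varphi$, so that $\abs{\partial_t\varphi} + \abs{\nabla\varphi} \le C\lambda\varphi$ on $\overline{Q}$ with $C$ absorbing $\sup_{\overline{Q}}\varphi$ and $\max_{\overline\Omega}\abs{x-x_0}$; the last two integrals are then bounded by $C s\lambda\,\norm{e^{s\varphi}\nabla_{x,t}v}_{L^2(Q)}^2$, which produces exactly the $s\lambda$ factor of \eqref{eq2}. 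Young's inequality applied to the $\Box v$ cross-term supplies $\norm{e^{s\varphi}\Box v}_{L^2(Q)}^2$ together with a remainder $\norm{e^{s\varphi}\nabla_{x,t}v}_{L^2(Q)}^2$ carrying no $s\lambda$ weight.

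Since the identity involves $\nabla\partial_t v$ and $\partial_t^2 v$, which need not lie in $L^2(Q)$ for a mere $\mathcal C^1([0,T];L^2(\Omega))\cap\mathcal C([0,T];H_0^1(\Omega))$ solution, I would first prove it for the regular solutions furnished by Theorem \ref{thm:FP} with $q=0$ (approximating $\Box v$, $v(\cdot,0)$ and $\partial_t v(\cdot,0)$ by smooth compactly supported data), and then pass to the limit exactly as in the proof of Lemma \ref{lem:ce-hyper}, the convergence of the time-boundary terms at $t=0$ and $t=T$ being guaranteed by the continuous dependence estimate \cite[Lemma 3.2]{BY17}.

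The one genuine subtlety, which I expect to be the main obstacle, is the leftover term $\norm{e^{s\varphi}\nabla_{x,t}v}_{L^2(Q)}^2$ that carries no $s\lambda$ and thus cannot be absorbed into $s\lambda\,\norm{e^{s\varphi}\nabla_{x,t}v}_{L^2(Q)}^2$ when $s\lambda$ is small. I would settle this by a dichotomy. When $s\lambda \ge 1$ one has $1 \le s\lambda$, so the term is trivially dominated and the identity already yields \eqref{eq2}. When $s\lambda \le 1$, a standard weighted energy estimate finishes the job: from the differential inequality $\abs{E_s'(t)} \le C(1+s\lambda)E_s(t) + \int_\Omega e^{2s\varphi}\abs{\Box v}^2\,dx$, whose constant stays bounded precisely because $s\lambda \le 1$, Grönwall's inequality (run backward from $t=T$) bounds $\norm{e^{s\varphi}\nabla_{x,t}v}_{L^2(Q)}^2 = \int_0^T E_s(t)\,dt$ by $C\bigl(E_s(T) + \norm{e^{s\varphi}\Box v}_{L^2(Q)}^2\bigr)$, that is by the remaining right-hand side of \eqref{eq2}. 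Combining the two regimes gives \eqref{eq2} for all $s \ge 0$ and $\lambda \ge 0$.
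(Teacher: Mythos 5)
Your proof is correct, and at its core it is the same argument as the paper's: the identity you obtain by multiplying $\Box v$ by $e^{2s\varphi}\partial_t v$ and integrating by parts in $x$ and $t$ is exactly the paper's computation (the paper phrases it as an identity for $\int_Q \partial_t \abs{e^{s\varphi}\partial_t v}^2\, dx\, dt$ and drops the favorable term $-\norm{e^{s\varphi(\cdot,0)}\nabla v(\cdot,0)}_{L^2(\Omega)}^2$, whereas you keep the full weighted energy $E_s(0)$ on the left; these are the same identity rearranged), and your regularization step mirrors the paper's density argument extending the estimate from $\mathcal C^2([0,T];L^2(\Omega))\cap \mathcal C([0,T];H^2(\Omega)\cap H_0^1(\Omega))$ to the stated class. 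The one place where you genuinely go beyond the paper is the dichotomy in $s\lambda$. The paper applies Young's inequality to the cross term and simply asserts \eqref{eq2}; as you correctly observe, this leaves the remainder $\norm{e^{s\varphi}\partial_t v}_{L^2(Q)}^2$, which carries no factor of $s\lambda$ and cannot be absorbed into $s\lambda\norm{e^{s\varphi}\nabla_{x,t}v}_{L^2(Q)}^2$ when $s\lambda<1$. Since the lemma is stated for all $s\geq 0$ and $\lambda\geq 0$, this is a real (if minor) gap in the paper's own write-up --- harmless in practice, because Lemma \ref{lem:pe} is only invoked with $\lambda=\lambda_0$ fixed and $s\geq 1$, where the remainder is trivially dominated. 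Your backward Gr\"onwall estimate in the regime $s\lambda\leq 1$ closes that gap, so your argument actually establishes the lemma on the full parameter range claimed in its statement.
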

\begin{proof}
Let us prove \eqref{eq2} for $v \in \mathcal C^2([0,T];L^2(\Omega)) \cap \mathcal C([0,T];H^2(\Omega) \cap H_0^1(\Omega))$.
In this case we get
\begin{equation}
\label{eq2a}
\norm{e^{s\varphi(\cdot,0)} \partial_t v(\cdot,0)}_{L^2(\Omega)}^2
=  \norm{e^{s\varphi(\cdot,T)} \partial_t v(\cdot,T)}_{L^2(\Omega)}^2 - \int_Q \partial_t \abs{e^{s\varphi} \partial_t v}^2\ dx dt
\end{equation}
and
\begin{eqnarray}
\int_Q \partial_t \abs{e^{s\varphi} \partial_t v}^2\ dx dt 
&= & 2 \int_Q e^{2s\varphi} \left(s\lambda\varphi(\partial_t \psi) \abs{\partial_t v}^2 
+  (\partial_t^2 v)\partial_t v \right) dxdt 
\nonumber \\
&= & 2\int_Q e^{2s\varphi} \left(s\lambda\varphi (\partial_t \psi) \abs{\partial_t v}^2 
+ (\Box v + \Delta v) \partial_t v \right) dxdt \label{eq2b}
\end{eqnarray}
through direct computation. Next, since $v = 0$ on $\Sigma$, we have
$$
\int_Q e^{2s\varphi} (\Delta v) \partial_t v\   dxdt 
= -\int_Q e^{2s\varphi} \left(\nabla v \cdot \nabla (\partial_t v) 
+ 2s\lambda\varphi  (\nabla \psi \cdot\nabla v) \partial_t v \right) dxdt
$$
by integrating by parts over $\Omega$. Thus, in light of the straightforward identity
\begin{eqnarray*}
2 \int_Q e^{2s\varphi} \nabla v \cdot \nabla (\partial_t v) dx dt
& = & \int_Q e^{2s\varphi} \partial_t \abs{\nabla v}^2 dx dt \\
& = & \norm{e^{s\varphi(\cdot,T)} \nabla v(\cdot,T)}_{L^2(\Omega)}^2
- \norm{e^{s\varphi(\cdot,0)} \nabla v(\cdot,0)}_{L^2(\Omega)}^2
- 2 s \lambda \int_Q e^{2s\varphi}  \varphi (\partial_t \psi) \abs{\nabla v}^2dxdt,
\end{eqnarray*}
we find that
\begin{eqnarray*}
& & -2 \int_Q e^{2s\varphi} (\Delta v) \partial_t v\   dxdt \\
&= & \norm{e^{s\varphi(\cdot,T)} \nabla v(\cdot,T)}_{L^2(\Omega)}^2
- \norm{e^{s\varphi(\cdot,0)} \nabla v(\cdot,0)}_{L^2(\Omega)}^2 
- 2 s \lambda \int_Q e^{2s\varphi}  \varphi \left( (\partial_t \psi) \abs{\nabla v}^2 - 2 ( \nabla\psi \cdot\nabla v) \partial_t v \right) dxdt  \\
&\leq &  \norm{e^{s\varphi(\cdot,T)} \nabla v(\cdot,T)}_{L^2(\Omega)}^2
+ 2 s \lambda  \int_Q e^{2s\varphi}  \varphi \left( \abs{\partial_t \psi} \abs{\nabla v}^2 + 2 \abs{\nabla\psi \cdot\nabla v} \abs{\partial_t v} \right) dxdt.
\end{eqnarray*}
Putting the above estimate together with \eqref{eq2a}-\eqref{eq2b} and using that the two functions 
$\psi$ and $\varphi$ are in $W^{1,\infty}(Q)$, we obtain \eqref{eq2}. 

Now, the claim of Lemma \ref{lem:pe} follows readily from this and the density of 
$\mathcal C^2([0,T];L^2(\Omega)) \cap \mathcal C([0,T];H^2(\Omega) \cap H_0^1(\Omega))$ 
in the space $\{ v \in \mathcal C^1([0,T]; L^2(\Omega))\cap \mathcal C([0,T];H_0^1(\Omega));\ \Box v \in L^2(Q) \}$ endowed with the norm
$\norm{v}_{\mathcal C^1([0,T];L^2(\Omega))} + \norm{v}_{\mathcal C([0,T];H^1(\Omega))} 
+ \norm{\Box v}_{L^2(Q)}$.
\end{proof}


Armed with Lemma \ref{lem:pe} we are now in position to build the proof of Theorem \ref{thm:stab}.

The first step of the Bukhgeim-Klibanov method is to differentiate once with respect to $t$ on both sides of \eqref{sy1}.
Setting $v := \partial_t u$, we get that
\begin{equation}
\label{sy4}
\left\{
\begin{aligned}
& \partial_t^2 v + q\partial_t^{\alpha} v - \Delta v + B \cdot \nabla v + cv = (\partial_t R)f 
                                                                                                                        &&\quad \mbox{in } Q\\
& v = 0                                                                                                   &&\quad \mbox{on } \Sigma\\
& v(\cdot,0) = \partial_t u(\cdot, 0) = 0, \quad \partial_t v(\cdot,0) = \partial_t^2 u(\cdot,0) = R(\cdot,0)f
                                                                                                              &&\quad \mbox{in } \Omega.
\end{aligned}
\right.
\end{equation}
Here we used the fact that $q$ is independent of $t$ and that
$u(\cdot,0) = \partial_t u(\cdot,0) = 0$ in $\Omega$, in order to write
$$
\pa_t (q \pa_t^\alpha u)(t) = q \pa_t ( \pa_t^\alpha u)(t) = q \pa_t^\alpha (\pa_t u)(t) = q \pa_t^\alpha v(t),\ t\in (0,T),
$$
see e.g., \cite[Section 2.2.5]{P99}. Next, we apply Proposition \ref{pr:ce} with $\lambda=\lambda_0$ to \eqref{sy4}. We obtain that for all $s \geq s_1$,
\begin{eqnarray}
& &s \norm{e^{s \varphi} \nabla_{x,t} v}_{L^2(Q)}^2 +  s^3 \norm{e^{s \varphi}v}_{L^2(Q)}^2 \nonumber \\
& \leq & C \left( \norm{e^{s \varphi}f}_{L^2(Q)}^2 + e^{Cs} \norm{\partial_\nu v}_{L^2(\Sigma_0)}^2 
+ s \norm{e^{s\varphi(\cdot,T)} \nabla_{x,t} v(\cdot,T)}_{L^2(\Omega)}^2 + s^3 \norm{e^{s\varphi(\cdot,T)} v(\cdot,T)}_{L^2(\Omega)}^2 \right), \label{eq3}
\end{eqnarray}
where $C$ denotes a generic positive constant that is independent of $s$.

In view of retrieving the unknown function $f$ from the (second) initial condition of \eqref{sy4},
\begin{equation}
\label{eq4}
\partial_t v(\cdot,0) = R(\cdot,0)f\ \mbox{in}\ \Omega,
\end{equation}
we apply Lemma \ref{lem:pe} to $v$. We get for all $s \geq 1$ that
\begin{eqnarray*}
& & \norm{e^{s\varphi(\cdot ,0)} \partial_t v(\cdot,0)}_{L^2(\Omega)}^2 \\
& \leq & C \left( \norm{e^{s\varphi} ((\partial_t R)f - q\partial_t^\alpha v - B \cdot \nabla v -cv)}_{L^2(Q)}^2 + s \norm{e^{s\varphi} \nabla_{x,t} v}_{L^2(Q)}^2 + 
\norm{e^{s\varphi(\cdot,T)} \nabla_{x,t} v(\cdot,T)}_{L^2(\Omega)}^2 \right) 
\\
&\leq & C \left( \norm{e^{s\varphi} f}_{L^2(Q)}^2 + s \norm{e^{s\varphi} \nabla_{x,t} v}_{L^2(Q)}^2 + s^3 \norm{e^{s\varphi} v}_{L^2(Q)}^2
+ \norm{e^{s\varphi(\cdot,T)} \nabla_{x,t} v(\cdot,T)}_{L^2(\Omega)}^2 \right),
\end{eqnarray*}
where we used Lemma \ref{lem:frac} in the last line. Thus, with reference to the positivity condition \eqref{con:R}, it follows from \eqref{eq3}-\eqref{eq4} that
\begin{equation}
\label{eq4.0}
\norm{e^{s\varphi(\cdot,0)} f}_{L^2(\Omega)}^2 
\leq C \left( \norm{e^{s\varphi} f}_{L^2(Q)}^2 + e^{Cs} \norm{\partial_\nu v}_{L^2(\Sigma_0)}^2
+ s \norm{e^{s\varphi(\cdot,T)} \nabla_{x,t} v(\cdot,T)}_{L^2(\Omega)}^2 + s^3 \norm{e^{s\varphi(\cdot,T)} v(\cdot,T)}_{L^2(\Omega)}^2 \right)
\end{equation}
for all $s \geq s_2 = \max \{ 1, s_1 \}$. 

Further, since $t \mapsto \varphi(x,t)$ attains its maximum at $t=0$ for all $x \in \Omega$, then
$\norm{e^{s\varphi} f}_{L^2(Q)}$ can be made arbitrarily small relative to $\norm{e^{s\varphi(\cdot,0)} f}_{L^2(\Omega)}$ by taking $s$ sufficiently large. This can be seen by following the same path as in the derivation of \cite[Lemma 5.3]{BY17}, that is to say by writing
\begin{equation}
\label{eq4.1}
\norm{e^{s\varphi} f}_{L^2(Q)}^2 
=\int_{\Omega} e^{2 s \varphi(x,0)} \abs{f(x)}^2 \left( \int_0^T e^{-2s (\varphi(x,0)-\varphi(x,t))} dt \right) dx,\ s \geq 0,
\end{equation}
in accordance with Lebesgue's theorem, and by noticing for all $(x,t) \in Q$ that
\begin{eqnarray}
\varphi(x,0)-\varphi(x,t) & = & e^{\lambda_0 \psi(x,0)} \left(1-e^{\lambda_0 (\psi(x,t)-\psi(x,0))} \right) \nonumber \\
& = & e^{\lambda_0 (\abs{x-x_0}^2+\beta_0)} \left(1-e^{-\lambda_0 \beta t^2} \right) \nonumber \\
& \geq & e^{\lambda_0 (d_0^2+\beta_0)} \left(1-e^{-\lambda_0 \beta t^2} \right) := \zeta(t), \label{eq4.2}
\end{eqnarray}
where $d_0$ is defined in \eqref{def-d0&1}. Indeed, putting \eqref{eq4.1} together with \eqref{eq4.2}, we obtain that
$$ \norm{e^{s\varphi} f}_{L^2(Q)}^2 \leq \kappa(s) \norm{e^{s\varphi(\cdot,0)} f}_{L^2(\Omega)}^2,\ s \geq 0, $$
where the non-negative function $\kappa(s):=\int_0^T e^{-2 s \zeta(t)} dt$ converges to zero 
as $s$ tends to $+\infty$.
Therefore, taking $s$ so large that $\kappa(s) \leq (2C)^{-1}$, we get that $2(\norm{e^{s\varphi(\cdot,0)} f}_{L^2(\Omega)}^2 - C\norm{e^{s\varphi} f}_{L^2(Q)}^2) \geq \norm{e^{s\varphi(\cdot,0)} f}_{L^2(\Omega)}^2$ according to the above estimate, 
which establishes that the first term on the right-hand side of \eqref{eq4.0} can be absorbed into 
its left-hand side. 
As a consequence, there exists
$s_3 \geq s_2$ such that
\begin{equation}
\label{eq4.3}
\norm{e^{s\varphi(\cdot,0)} f}_{L^2(\Omega)}^2 
\leq C \left( e^{Cs} \norm{\partial_\nu v}_{L^2(\Sigma_0)}^2
+ s \norm{e^{s\varphi(\cdot,T)} \nabla_{x,t} v(\cdot,T)}_{L^2(\Omega)}^2 + s^3 \norm{e^{s\varphi(\cdot,T)} v(\cdot,T)}_{L^2(\Omega)}^2 \right),\ s \geq s_3.
\end{equation}

We are thus left with the task of showing that the two last terms on the right-hand side of 
\eqref{eq4.3} are dominated by $\norm{e^{s\varphi(\cdot,0)} f}_{L^2(\Omega)}^2$, 
provided $s$ is large enough. 
This can be done by combining the Poincar\'e inequality, giving
$$ \norm{e^{s\varphi(\cdot,T)} \nabla_{x,t} v(\cdot,T)}_{L^2(\Omega)}^2 + \norm{e^{s\varphi(\cdot,T)} v(\cdot,T)}_{L^2(\Omega)}^2
\leq C e^{2se^{\lambda_0 \left(d_1^2-\beta T^2 + \beta_0 \right)}} \norm{\nabla_{x,t} v(\cdot,T)}_{L^2(\Omega)}^2, $$
with the energy estimate \eqref{en1} at $t=T$,
$$
\norm{\nabla_{x,t} v(\cdot,T)}_{L^2(\Omega)}^2
\leq C \left( \norm{R(\cdot,0)f}_{L^2(\Omega)}^2 +  \norm{(\partial_t R)f}_{L^2(Q)}^2 \right) 
\leq C \norm{f}_{L^2(\Omega)}^2
$$
and
$$ 
\norm{e^{s\varphi(\cdot,0)} f}_{L^2(\Omega)}^2 
\geq e^{2s e^{\lambda_0( d_0^2 + \beta_0 )}} \norm{f}_{L^2(\Omega)}^2,\ s \geq 0,
$$
where we recall that $d_0$ and $d_1$ are defined in \eqref{def-d0&1}. This entails that
$$ \norm{e^{s\varphi(\cdot,T)} \nabla_{x,t} v(\cdot,T)}_{L^2(\Omega)}^2 + \norm{e^{s\varphi(\cdot,T)} v(\cdot,T)}_{L^2(\Omega)}^2
\leq C e^{-2 s e^{\lambda_0(d_0^2+\beta_0)} \delta_0} \norm{e^{s\varphi(\cdot,0)} f}_{L^2(\Omega)}^2,\  s \geq 0, $$ 
where $\delta_0 :=1-e^{-\lambda_0 \left( \beta T^2 -(d_1^2-d_0^2) \right)}$, and consequently that
$$ \left(1-Cs^3 e^{-2 s e^{\lambda_0(d_0^2+\beta_0)} \delta_0} \right)  \norm{e^{s\varphi(\cdot,0)} f}_{L^2(\Omega)}^2 \leq C e^{Cs} \norm{\partial_\nu v}_{L^2(\Sigma_0)}^2,\ s \geq s_3, $$
by \eqref{eq4.3}. Finally, since $\delta_0 \ge 1-e^{-\lambda_0 \beta \frac{T^2}{2}}>0$ from \eqref{con:beta}, the desired result follows upon taking $s$ sufficiently large in the above estimate.

\section*{Acknowledgments}
The first author was supported by Grant-in-Aid for Scientific Research (S) 
15H05740, Grant-in-Aid for Research Activity Start-up 19K23400 
of JSPS (Japan Society for the Promotion of Science) and 
Grant-in-Aid for JSPS Fellows 20F20319.
The second and third authors were partially supported by the Agence 
Nationale de la Recherche under grant ANR-17-CE40-0029. 
The fourth author was supported by Grant-in-Aid for
Scientific Research (A) 20H00117, JSPS and by NSFC (Nos.\! 11771270,
91730303), and this work was done by RUDN University 
Strategic Academic Leadership Program.



\begin{thebibliography}{99}

\bibitem{AP} 
S. Acosta and B. Palacios, 
Thermoacoustic tomography for an integro-differential wave equation modeling attenuation, 
J. Differential Equations \textbf{264} (2018): 1984-2010.

\bibitem{AK} 
M. Agranovsky and P. Kuchment,  
Uniqueness of reconstruction and an inversion procedure for thermoacoustic and photoacoustic
tomography with variable sound speed, 
Inverse Problems \textbf{23} (2007): 2089-2102.

\bibitem{BHKY}
G. Bao, G. Hu, Y. Kian and T. Yin, 
Inverse source problems in elastodynamics, 
Inverse Problems \textbf {34} (2018): 045009.



\bibitem{BY17}
M. Bellassoued and M. Yamamoto, 
{\em Carleman Estimates and Applications to Inverse Problems for Hyperbolic Systems}, 
Springer-Japan, Tokyo, 2017.

\bibitem{BK} 
A. L. Bukhgeim and M. V. Klibanov, 
Global uniqueness of a class of multidimensional inverse problems, 
Soviet Mathematics Doklady \textbf{24} (1981): 244-247.


\bibitem{CH}
W. Chen and S. Holm, Modified Szabo's wave equation models for lossy media obeying frequency 
power law The Journal of the Acoustical Society of America  \textbf{114} (2003): 2570-2574.



\bibitem{DGGS}  
K. Diethelm, R. Garrappa, A. Giusti and M. Stynes,
Why fractional derivatives with nonsingular kernels should not be used,
Fract. Calc. Appl. Anal. \textbf{23} (2020): 610-634.



\bibitem{FK}
K. Fujishiro and Y. Kian, 
Determination of time dependent factors of coefficients in fractional diffusion equations,  
Math. Control and Related Fields \textbf{6} (2016): 251-269.

\bibitem{HKN} 
Y. Hristova, P. Kuchment and L. Nguyen, 
Reconstruction and time reversal in thermoacoustic tomography in acoustically homogeneous and inhomogeneous media, 
Inverse Problems \textbf{24} (2008): 055006.




\bibitem{HK}
G. Hu and Y. Kian, 
Uniqueness and stability for the recovery of a time-dependent source in elastodynamics,
 Inverse Probl. Imaging. \textbf{14} (2020): 463-487.

\bibitem{HKLZ}
G. Hu, Y. Kian, P. Li and Y. Zhao, 
Inverse moving source problems in electrodynamics, 
Inverse Problems \textbf{35} (2019): 075001.

\bibitem{HKZ}
G. Hu, Y. Kian and Y. Zhao,
Uniqueness to some inverse source problems for the wave equation in unbounded domains,
 Acta Mathematicae Applicatae Sinica, English Series \textbf{36} (2020): 134-150.

\bibitem{H19}
X. Huang, 
Carleman estimate for a general second-order hyperbolic equation, {\em Inverse Problems and Related Topics}, Chapter 7, 149-165, 
Springer, Singapore, 2020, DOI: https://doi.org/10.1007/978-981-15-1592-7. 

\bibitem{HIY20}
X. Huang, O. Yu. Imanuvilov and M. Yamamoto, Stability for inverse source problems by
Carleman estimates, Inverse Problems \textbf{36} (2020): 125006, 20 pp.

\bibitem{I} 
V. Isakov,  
Inverse Source Problems, 
Mathematical Surveys and Monographs, AMS, Providence, Rhode Island, 1990.

\bibitem{I1} 
V. Isakov, 
Inverse Problems for Partial Differential Equations,
Springer-Verlag, Berlin, 1998.

\bibitem{JLLY}  
D. Jiang, Z. Li, Y. Liu and M. Yamamoto,  
Weak unique continuation property and a related inverse source problem for time-fractional diffusion-advection equations, 
Inverse Problems \textbf{33} (2017): 055013.

\bibitem{JLY} 
D. Jiang, Y. Liu and M. Yamamoto,  
Inverse source problem for the hyperbolic equation with a time-dependent principal part, 
J. Differential Equations \textbf{262} (2017): 653-681.

\bibitem{Kh} 
A. Khaidarov, 
Carleman estimates and inverse problems for second order hyperbolic equations, 
Math. USSR-Sbornik \textbf{58} (1987): 267-277.

\bibitem{KSXY}
{Y. Kian, \'E. Soccorsi, Q. Xue and M. Yamamoto}, 
{Identification of time-varying source term in time-fractional diffusion equations}, 
to appear in Comm. Math. Sci.

\bibitem{KY2}
{Y. Kian and M. Yamamoto}, 
{Reconstruction and stable recovery of source terms and coefficients appearing in diffusion equations},
Inverse Problems \textbf{35} (2019): 115006.

\bibitem{KJ}
{N. Kinash and J. Janno}, 
{An inverse problem for a generalized fractional derivative with an application
in reconstruction of time- and space-dependent sources in fractional diffusion 
and wave equations}, 
Mathematics \textbf{7} (2019): 1138.

\bibitem{Klibanov1992} 
M. V. Klibanov, 
Inverse problems and Carleman estimates, 
Inverse Problems \textbf{8} (1992): 575-596.

\bibitem{LM2}
{J-L. Lions and E. Magenes}, 
{\em Non-homogeneous Boundary Value Problems and Applications}, 
Vol. II, Springer-Verlag, Berlin, 1972.

\bibitem{L}
{J-L. Lions}, 
{\em Contr\^olabilit\'e Exacte. Perturbations et Stabilisation de Syst\`emes 
Distribu\'es},  
Vol. I, Masson Paris, 1988. 

\bibitem{LLY2}
{Y. Liu, Z. Li and M. Yamamoto}, 
{Inverse problems of determining
sources of the fractional partial differential equations},
Handbook of fractional calculus with applications, Vol. 2,  411-429, De Gruyter, Berlin, 2019. 












\bibitem{HP}
S. K. Patch and M. Haltmeier,   
Thermoacoustic tomography - ultrasound attenuation artifacts,
IEEE Nuclear Science Symposium Conference Record \textbf{4} (2006): 2604-2606.
 
\bibitem{P99}
I. Podlubny,
{\em Fractional Differential Equations},
Academic Press, San Diego, 1999.



\bibitem{SU2} 
P. Stefanov and G. Uhlmann, 
Thermoacoustic tomography arising in brain imaging, 
Inverse Problems \textbf{27} (2011): 045004.

\bibitem{SU}
P. Stefanov and G. Uhlmann,
Multiwave methods via ultrasound,
Inside Out II, MSRI publications \textbf{60} (2012): 271-324.

\bibitem{SU1} 
P. Stefanov and G. Uhlmann, 
Recovery of a source term or a speed with one measurement and applications, 
Trans. Amer. Math. Soc. \textbf{365} (2013): 5737-5758.

\bibitem{Sz} 
T. Szabo,  
Time domain wave equations for lossy media obeying a frequency power law, 
The Journal of the Acoustical Society of America \textbf{96} (1994): 491-500.


\bibitem{TTS}
A. Tolstoy, Y.-C Teng and E.-C. Shang, Theoretical and Computational Acoustics 2003, World Scientific, Singapore, 2004.


\bibitem{XW}
M. Xu and L. V. Wang,
Universal back-projection algorithm for photoacoustic computed tomography,
Physical Review E \textbf{71} (2005): 016706.


\bibitem{Ya95} 
M. Yamamoto,  
Stability, reconstruction formula and regularization for an inverse source hyperbolic problem by control method, 
Inverse Problems \textbf{11} (1995): 481-496.

\bibitem{Ya99} 
M. Yamamoto,  
Uniqueness and stability in multidimensional hyperbolic inverse problems, 
J. Math. Pure Appl. \textbf{78} (1999): 65-98.


\bibitem{YLY18}
J. Yu, Y. Liu and M. Yamamoto,
Theoretical stability in coefficient inverse problems for general hyperbolic equations with numerical reconstruction,
Inverse Problems \textbf{34} (2018): 045001.

\end{thebibliography}
\end{document}